\newcommand{\beq}[1]{\begin{equation}\label{eq:#1}}
	\newcommand{\eeq}{\end{equation}}
\newtheorem{theorem}{Theorem}[section]
\newtheorem{lemma}[theorem]{Lemma}
\newtheorem{notation}[theorem]{Notation}
\newtheorem{proposition}[theorem]{Proposition}
\theoremstyle{definition}
\newtheorem{definition}[theorem]{Definition}
\newtheorem{example}[theorem]{Example}
\theoremstyle{remark}
\newtheorem{remark}[theorem]{Remark}
\numberwithin{equation}{section}
\def\G{{\mathbb{G}}}
\def\R{{\mathcal{R}}}
\def\L{{^{2}_{L^2(\G)}}}
\newcommand{\vertiii}[1]{{\left\vert\kern-0.25ex\left\vert\kern-0.25ex\left\vert #1 
    \right\vert\kern-0.25ex\right\vert\kern-0.25ex\right\vert}}
\begin{document}

	\setcounter{page}{1}
	
\title[Fractional hypoelliptic Schr\"odinger equations]{Fractional Schr\"odinger equations with singular potentials of higher order. II: Hypoelliptic case}

\author[M. Chatzakou]{Marianna Chatzakou}
\address{
	Marianna Chatzakou:
	\endgraf
    Department of Mathematics: Analysis, Logic and Discrete Mathematics
    \endgraf
    Ghent University, Belgium
  	\endgraf
	{\it E-mail address} {\rm marianna.chatzakou@ugent.be}
		}

\author[M. Ruzhansky]{Michael Ruzhansky}
\address{
  Michael Ruzhansky:
  \endgraf
  Department of Mathematics: Analysis, Logic and Discrete Mathematics
  \endgraf
  Ghent University, Belgium
  \endgraf
 and
  \endgraf
  School of Mathematical Sciences
  \endgraf
  Queen Mary University of London
  \endgraf
  United Kingdom
  \endgraf
  {\it E-mail address} {\rm michael.ruzhansky@ugent.be}
  }

\author[N. Tokmagambetov]{Niyaz Tokmagambetov}
\address{
  Niyaz Tokmagambetov:
  \endgraf
  Department of Mathematics: Analysis, Logic and Discrete Mathematics
  \endgraf
  Ghent University, Belgium
  \endgraf
  and
    \endgraf   
    Al--Farabi Kazakh National University
    \endgraf
    Almaty, Kazakhstan
    \endgraf
  {\it E-mail address} {\rm niyaz.tokmagambetov@ugent.be, tokmagambetov@math.kz}
  }

\thanks{The authors are supported by the FWO Odysseus 1 grant G.0H94.18N: Analysis and Partial Differential Equations and by the Methusalem programme of the Ghent University Special Research Fund (BOF) (Grant number 01M01021). Michael Ruzhansky is also supported by EPSRC grant EP/R003025/2. \\
{\it Keywords:} Klein--Gordon equation; Rockland operator; Cauchy problem; graded Lie group; weak solution; singular mass; very weak solution; regularisation}

\begin{abstract} 
In this paper we consider the space-fractional Schr\"odinger equation with a singular potential for a wide class of fractional hypoelliptic operators. Such analysis can be conveniently realised in the setting of graded Lie groups. The paper is a continuation and extension of the first part \cite{ARST21b} where the classical Schr\"odinger equation on $\mathbb R^n$ with singular potentials was considered.  
\end{abstract}

\maketitle

\tableofcontents

\section{Introduction}
This paper is devoted to the fractional Schr\"odinger equation for positive (left) Rockland operator $\mathcal{R}$  (left-invariant hypoelliptic partial differential operator which is homogeneous of positive degree $\nu$) on a general graded Lie group $\mathbb{G}$, with a possibly singular potential; that is for $T>0$, and for $s>0$ we consider the Cauchy problem
\begin{equation}
\label{sch.eq}
\begin{cases}
iu_{t}(t,x) +\mathcal{R}^{s}u(t,x)+p(x)u(t,x)=0\,,\quad (t,x)\in [0,T]\times \mathbb{G}\,,\\
u(0,x)=u_0(x)\,,u_{t}(0,x)=u_1(x),\; x \in \G\,,	
\end{cases}       
\end{equation}
where $p$ is a non-negative distributional function.

The main idea of this paper is to relax the regularity assumptions on the potential $p$ in \eqref{sch.eq}. The coefficient $p$ is allowed to have $\delta$-function type singularities. But the question of having a suitable notion of solutions to \eqref{sch.eq} is still open. The situation is reaching an impasse by the well--known impossibility problem on the multiplication of distributions stated by Schwartz in \cite{Sch54}. To deal with it, in this paper we use the concept of very weak solutions introduced in \cite{GR15} to work with the wave equations with irregular coefficients. Later, the developed tools were applied to other equations with singular coefficients \cite{RT17a}, \cite{RT17b}, \cite{ART19}, and \cite{MRT19}. 
In all these papers authors work with the time-dependent equations and in the recent works \cite{ARST21a}, \cite{ARST21b}, 
 \cite{ARST21c} and \cite{Gar20} one started a development of the very weak solutions for partial differential equations with (strongly singular) space-depending coefficients. 

In this paper, we will show in details that the notion of very weak solutions is applicable to the Cauchy problem \eqref{sch.eq} for the fractional Schr\"odinger equation for the Rockland operator $\mathcal{R}$ on the graded Lie group $\mathbb{G}$ with a strongly singular potential-coefficient depending on the spacial variable. Indeed, the present work is an improvement and extension of the results obtained in the first part \cite{ARST21b} addressed to the fractional Schr\"odinger equation. It should be mentioned that the setting of \cite{ARST21b} was the equation \eqref{sch.eq} for $\G=\mathbb R^d$ and $\mathcal{R}=(-\Delta)^{s}$ being the positive fractional Laplacian on the Euclidean space. Consequently, the results of \cite{ARST21b} can be considered as a special case of the results obtained here.

At the same time, we also give here some corrections and clarifications to statements of the first part \cite{ARST21b}, see Remark \ref{rem.negl} and Remark \ref{finrem}. Also, the arguments around the Sobolev embedding techniques starting from Proposition \ref{prop2} are new here, giving a new result also for the setting in \cite{ARST21b}.

Let us briefly recall the necessary notions in the setting of graded groups. For a more detailed exposition we refer to Folland and Stein [Chapter 1 in \cite{FS82}], or to the more recent one by Fischer and the second author [Chapter 3 in \cite{FR16}]. 
\par A connected simply connected Lie group $\G$ is called a graded Lie group if its Lie algebra $\mathfrak{g}$ has a vector space decomposition of the following form 
\[ 
 \mathfrak{g}= \bigoplus_{i=1}^{\infty}  \mathfrak{g}_{i}\,,
\]
where all, but finitely many $\mathfrak{g}_{i}$'s, are equal to $\{0\}$, and we have 
\[
[\mathfrak{g}_{i},\mathfrak{g}_{j}]\subset \mathfrak{g}_{i+j}\footnote{For the vector spaces $V$, $W$ we denote by $[V,W]$ the vector space $\{[v,w]: v \in V\,,w \in W\}$, where $[v,w]:=vw-wv$ is the Lie bracket.}\,,\quad \text{for all}\quad i,j \in \mathbb{N}\,.
\]
For a graded Lie group $\G \sim \mathbb{R}^n$ with Lie algebra $\mathfrak{g}$, we fix a basis $\{X_1,\cdots,X_n\}$ of $\mathfrak{g}$ adapted to the gradation of the above form. Then, the exponential map $\exp_{\G}: \mathfrak{g} \rightarrow \G$ defined as
\[
x:=\exp_{\G}(x_1X_1+\cdots+x_nX_n)\,,
\]
is a global diffeomorphism from $\mathfrak{g}$ onto $\G$.
\par Let $A$ be a diagonalisable linear operator on $\mathfrak{g}$ with positive eigenvalues. Then, a family $\{D_r\}_{r>0}$ of dilations of $\mathfrak{g}$ is a collection of linear mappings of the form 
\[
D_r=\textnormal{Exp}(A\, \textnormal{ln}r)=\sum_{k=0}^{\infty}\frac{1}{k!}(\textnormal{ln}(r)A)^k\,,
\]
where $\textnormal{Exp}$ denotes the exponential of matrices. Moreover, the exponential mapping $\textnormal{exp}_\G$ on $\G$ transports the dilations $\{D_r\}_{r>0}$ on the group side; i.e., we have 
\begin{equation}\label{dw}
D_r(x)=rx=(r^{\nu_{1}}x_1,\cdots, r^{\nu_{n}}x_n)\,,x \in \G\,,
\end{equation}
where $\nu_{1},\cdots,\nu_{n}$ are the weights of the dilations. Additionally, each $D_r$, $r>0$, is a morphism of $\mathfrak{g}$, and consequently, also of $\G$. 
\par Finally, let us note that a connected simply connected Lie group $\G$ that can be equipped with such a family of automorphisms is called a homogeneous Lie group; for such groups, the quantity 
\[
Q:= \textnormal{Tr}A=\nu_{1}+\cdots+\nu_{n}\,,
\]
is called the homogeneous dimension of $\G$.\\

Recall that graded Lie groups are naturally also homogeneous Lie groups. Let us illustrate the above ideas with the following examples in the settings of two well-studied homogeneous Lie groups.
\begin{example}\label{1exa}
\textit{Heisenberg group $\mathbb{H}_n$, $n \in \mathbb{N}$:} Let $\mathfrak{h}_n$ be the Lie algebra of the Heisenberg group $\mathbb{H}_n \sim \mathbb{R}^{2n+1}$ with elements  \[\mathfrak{h}_n=\{X_1,\cdots,X_n,Y_1,\cdots,Y_n,T\}\,,\] that satisfy the (non-zero) commutator relations
 		\[
 		[X_i,Y_i]=T\,,\quad i=1,\cdots,n\,.
 		\]
 		Therefore, $\mathfrak{h}_n$ admits the following gradation
 		\[
 		\mathfrak{h}_n=V_1 \oplus V_2=\textnormal{span}\{X_1,\cdots,X_n,Y_1,\cdots,Y_n\}\oplus \mathbb{R}T\,.
 		\] 
 		The dilations on the elements of $\mathfrak{h}_n$ are given by 
 		\[
 		D_r(X_i)=rX_i\,,D_r(Y_i)=rY_i\,,i=1,\cdots,n\,,D_r(T)=r^2 T\,,
 		\]
 		and subsequently $\mathbb{H}_n$ is homogeneous when equipped with the dilations
 		\[
 		rh=(rx,ry,r^2 t)\,,\quad h=(x,y,t)\in \mathbb{R}^n \times \mathbb{R}^n \times \mathbb{R}\,,
 		\] 
 		with homogeneous dimension  $Q_{\mathbb{H}_n}=n+n+2=2n+2$.
 \end{example}
 \begin{example}\label{1exa-2}
 \textit{Engel group $\mathcal{B}_4$:} Let $\mathfrak{l}_4$ be the Lie algebra of the Engel group $\mathcal{B}_4 \sim \mathbb{R}^4$ with elements
 		\[
 		\mathfrak{l}_4=\{X_1,X_2,X_3,X_4\}\,,
 		\]
 		that satisfy the (non-zero) commutator relations
 		\[
 		[X_1,X_2]=X_3\,,\quad [X_1,X_3]=X_4\,.
 		\]
 		The Lie algebra $\mathfrak{l}_4$ admits the gradation
 		\[
 		\mathfrak{l}_4=V_1 \oplus V_2 \oplus V_3=\textnormal{span}\{X_1,X_2\}\oplus \mathbb{R}X_3 \oplus \mathbb{R}X_4\,,
 		\]
 		and the natural dilations on $\mathfrak{l}_4$ are given by 
 		\[
 		D_r(X_1)=rX_1\,,D_r(X_2)=rX_2\,,D_r(X_3)=r^2X_3\,,D_r(X_4)=r^3X_4\,,
 		\]
 		which, transported to the group side, yield 
 		\[
 		rx=(rx_1,rx_2,r^2x_3,r^3x_4)\,, x=(x_1,x_2,x_3,x_4)\in \mathcal{B}_4\,.
 		\]
 		Its homogeneous dimension is $Q_{\mathcal{B}_4}=1+1+2+3=7$.
 \end{example}
 \par Let $\pi$ be a representation of the group $\G$ on the separable Hilbert space $\mathcal{H}_\pi$. We say that a vector $v \in \mathcal{H}_\pi$ is a smooth vector and we write $v \in \mathcal{H}_{\pi}^{\infty}$, if the function 
 \[
 \G \ni x \mapsto \pi(x)v \in \mathcal{H}_\pi\,,
 \]
 is of class $C^\infty$. If $X \in \mathfrak{g}$, then for $v \in \mathcal{H}_{\pi}^{\infty}$, the limit 
 \[
 d\pi(X)v:= \lim_{t \rightarrow 0} \frac{1}{t}\left(\pi(\exp_\G(tX))v-v \right)\,,
 \]
 exists, and the mapping $d\pi: \mathfrak{g}\rightarrow \textnormal{End}(\mathcal{H}_{\pi}^{\infty})$ is the infinitesimal representation of $\mathfrak{g}$ on $\mathcal{H}_{\pi}^{\infty}$ associated to $\pi$. With an abuse of notation we shall write $\pi$ for the infinitesimal representation of $\mathfrak{g}$. Setting $\pi(X)^{\alpha}=\pi(X^{\alpha})$, $\alpha \in \mathbb{N}^n$, we can extend the infinitesimal representation of $\mathfrak{g}$ to elements of the universal enveloping algebra $\mathfrak{U}(\mathfrak{g})$; i.e., we can write 
 \[
 d\pi(T):=\pi(T)\,,\quad T \in \mathfrak{U}(\mathfrak{g})\,,
 \]
 where $T$ has been identified with a left-invariant operator on $\G$, and the set of infinitesimal representations $\{\pi(T): \pi \in \widehat{\G}\}$ is a fields of operators that turns out to be the symbol associated to $T$.
 \par Recall that an immediate consequence of the so-called \textit{Poincar\'{e}-Birkhoff-Witt} Theorem is that $\mathfrak{U}(\mathfrak{g})$ can be identified with the space of left-invariant differential operators on $\G$, and moreover, any left-invariant vector field can be written in a unique way as the sum 
	\begin{equation}\label{pbw}
	\sum_{\alpha \in \mathbb{N}^n}c_{\alpha}X^{\alpha}\,,
\end{equation}
	where all but finite $c_{\alpha} \in \mathbb{C}$ are zero, and where for $X_j \in \mathfrak{g}$ we have defined $X^{\alpha}:=X_{1}^{\alpha_{1}}\cdots X_{n}^{\alpha_{n}}$, for $\alpha=(\alpha_{1},\cdots,\alpha_{n}) \in \mathbb{N}^n$. 
 \par If $\pi \in \widehat{\G}$; that is, if $\pi$ is an element of the unitary dual of $\G$, then we say that the left-invariant differential operator $\R$ on $\G$, which is homogeneous of a positive degree, is a Rockland operator, if it satisfies the following Rockland condition:
 \[
 (\textbf{R})\,\text{for every non-trivial representation}\, \pi \in \widehat{\G}\,  \text{the operator}\,\pi(\R)\, \text{is injective on}\,\mathcal{H}_{\pi}^{\infty}\,,\text{i.e.,}
 \]
 \[
 \forall v \in \mathcal{H}_{\pi}^{\infty}\,,\pi(\R)v=0\Longrightarrow v=0\,.
 \]
 For a more detailed discussion of the above condition as appeared in the work of Rockland \cite{Roc78} we refer to [Sections 1.7 and 4.1 in \cite{FR16}]. Equivalent to $(\textbf{R})$ conditions have appeared in the works of Beals \cite{Bea77} and Helffer and Nourrigat \cite{HN79}, with the latter characterising Rockland operators as being the left-invariant hypoelliptic differential operators on $\G$. Spectral properties of the infinitesimal representations of Rockland operators have been considered in \cite{TR97}.
   
\par 
For $\pi \in \widehat{\G}$, and for $\R$ being a positive Rockland operator of homogeneous degree $\nu$, from \eqref{pbw} we obtain the following representation of symbol associated to $\R$,
\[
\pi(\R)=\sum_{[\alpha]=\nu}c_\alpha\pi(X)^{\alpha}\,,
\]
where $$[\alpha]=\nu_1 \alpha_1+\cdots+\nu_n \alpha_n$$ is the homogeneous length of the multi-index $\alpha$, and $$\pi(X)^{\alpha}=\pi(X^\alpha)=\pi(X_{1}^{\alpha_1}\cdots X_{n}^{\alpha_n}),$$ 
where $X_j$ is of homogeneous degree $\nu_j$.
\par Recall that $\R$ and $\pi(\R)$ are densely defined on $\mathcal{D}(\G)\subset L^2(\G)$ and  on $\mathcal{H}_{\pi}^{\infty}\subset \mathcal{H}_\pi$, respectively (see, e.g. [Proposition 4.1.15 in \cite{FR16}]). Additionally, let us mention that, for the groups we consider here, in the case where $\mathcal{H}_\pi=L^2(\mathbb{R}^m)$ we have $\mathcal{H}_{\pi}^{\infty}=\mathcal{S}(\mathbb{R}^m)$, see [Corollary 4.1.2 in \cite{CG90}]. From now on, let us denote by $\R$, and by $\pi(\R)$ the self-adjoint extensions of the above on the spaces $L^2(\G)$, and $\mathcal{H}_\pi$, respectively.
\par By the spectral theorem for unbounded operators (see, e.g. Theorem VIII.6 in \cite{RS85}) we can write
\[
\R=\int_{\mathbb{R}}\lambda\,dE(\lambda)\,,\quad \text{and}\quad \pi(\R)=\int_{\mathbb{R}}\lambda\,dE_{\pi}(\lambda)\,,
\]
where $E$ and $E_\pi$ stand for the spectral measures associated to $\R$ and to $\pi(\R)$. 
\par For our purposes, we have required the positivity of the Rockland operator $\mathcal{R}$ that should be regarded in the operator sense. In particular, the Rockland operator $\R$ is positive on $L^2(\G)$, if it is formally self-adjoint; that is we have $\mathcal{R}=\mathcal{R}^{*}$ in the universal enveloping algebra $\mathfrak{U}(\mathfrak{g})$, and $\R$ satisfies the condition
\[
\int_{\G}\mathcal{R}f(x)\overline{f(x)}\,dx \geq 0\,,\quad \forall f \in \mathcal{D}(\G)\,.
\] 
For a positive Rockland operator $\R$, the infinitesimal representations $\pi(\R)$ are also positive because of the relations between the spectral measures.
\par A standard example of a Rockland operator on a stratified Lie group $\G$ is the so-called sub-Laplacian on $\G$ that is of homogeneous degree $\nu=2$, and is defined as follows:
\par If $\G$ is a stratified Lie group with a given basis $Z_1,\cdots,Z_k$ for the first stratum of its Lie algebra, then the left-invariant differential operator on $\G$
given by
\[
Z_{1}^{2}+\cdots+Z_{k}^{2}\,,
\]
is called the sub-Laplacian on $\G$. 
\par The infinitesimal representations of such operators on the particular cases of the Heisenberg and Engel groups, as introduced in Examples \ref{1exa} and \ref{1exa-2}, are given in the following examples.
\begin{example}
Heisenberg group $\mathbb{H}_n$ : Using the Schr\"odinger representations (see e.g. \cite{Tay84}) of $\mathbb{H}_n$, the inifinitesimal representation, parametrised by $\lambda \in \mathbb{R}\setminus \{0\}$, of the sub-Laplacian $\mathcal{L}_{\mathbb{H}}$ on $\mathbb{H}_n$, is the operator on $\mathcal{H}_{\pi_\lambda}^{\infty}= \mathcal{S}(\mathbb{R}^{n})$ given by
    \[
   \mathcal{A}:= \pi_\lambda(\mathcal{L}_{\mathbb{H}})=|\lambda| \sum_{j=1}^{n} \left(\partial^{2}_{u_j}-u_{j}^{2} \right)\,.
    \]
    Now, $\mathcal{A}$ is the harmonic oscillator, and if we keep the same notation for its self-adjoint extension on $\mathcal{H}_{\pi_\lambda}=L^2(\mathbb{R}^n)$, then the spectrum of $-\mathcal{A}$ is explicitly known as
  \[
    \{ 2|\ell|+n\,,\ell \in \mathbb{N}^n\}\,,
    \]
    where $|\ell|=\ell_1+\cdots+\ell_n$, see, e.g. [Section 6.4 in \cite{FR16}].
\end{example}

\begin{example}
Engel group $\mathcal{B}_4$: Using the representations of $\mathcal{B}_4$ proved by Dixmier [p.333 in \cite{Dix57}] we see that the infinitesimal representation, parametrised by $\lambda \in \mathbb{R}\setminus \{0\}\,,\mu \in \mathbb{R}$, of the sub-Laplacian $\mathcal{L}_{\mathcal{B}_4}$ on $\mathcal{B}_4$, is the operator on $\mathcal{H}_{\pi_{\lambda,\mu}}=\mathcal{S}(\mathbb{R})$ given by 
    \[
    \mathcal{A}:=\pi_{\lambda,\mu}(\mathcal{L}_{\mathcal{B}_4})=\frac{d^2}{du^2}-\frac{1}{4}\left(\lambda u^2-\frac{\mu}{\lambda} \right)^2\,.
    \]
    The operator $\mathcal{A}$ here is an anharmonic oscillator that admits a self-adjoint extension on $\mathcal{H}_{\pi_\lambda}=L^2(\mathbb{R}^n)$ and has discrete spectrum, see e.g. \cite{CDR18}.
\end{example}
More generally, for a positive Rockland operator $\R$, Hulanicki, Jenkins and Ludwig \cite{HJL85} proved that the spectrum of $\pi(\R)$, with $\pi \in \widehat{\G}\setminus \{1\}$, is discrete and lies in $(0,\infty)$, which allows us to choose an orthonormal basis for $\mathcal{H}_\pi$ such that the self-adjoint operator $\pi(\R)$ admits an infinite matrix representation of the form 
\begin{equation}\label{repr.pr}
\pi(\R)=\begin{pmatrix}
\pi_{1}^{2} & 0 & \cdots & \cdots\\
0 & \pi_{2}^{2} & 0 & \cdots\\
\vdots & 0 & \ddots & \\
\vdots & \vdots & & \ddots
\end{pmatrix}\,,
\end{equation}
where $\pi \in \widehat{\G}\setminus \{1\}$ and $\pi_j>0$.
 \par We will now briefly recall the group Fourier transform: If we identify the irreducible unitary representations with their equivalence classes, then for $f \in L^1(\G)$ and for $\pi \in \widehat{\G}$, the group Fourier transform of $f$ at $\pi$ is the map
 \[
 \mathcal{F}_{\G}f: \pi \mapsto \mathcal{F}_{\G}f(\pi)\,,
 \]
 that is a linear endomorphism on $\mathcal{H}_\pi$, defined by 
 \[
 \mathcal{F}_{\G}f(\pi)\equiv \widehat{f}(\pi) \equiv \pi(f):= \int_{\G}f(x)\pi(x)^{*}\,dx\,,
 \]
 where the integration on $\G$ is taken with respect to the binvariant Haar measure $dx$ on $\G$. By the above we can also write
 \[
 \mathcal{F}_{\G}(\R f)(\pi)=\pi(\R)\widehat{f}(\pi)\,,
 \]
 and, using the basis in the representation of $\mathcal{H}_\pi$ given in \eqref{repr.pr}, the latter can be rewritten as
 \[
\left\{ \pi_{k}^{2}\cdot \widehat{f}(\pi)_{k,l}\right\}_{k,l \in \mathbb{N}}\,.
 \]
 \par For graded Lie groups, or more generally for connected simply connected nilpotent Lie groups, the orbit method or, more particularly, the geometry of co-adjoint orbits \cite{CG90, Kir04}, identifies the unitary dual $\widehat{{\G}}$ with a subset of a Euclidean space which is equipped with a concrete measure $\mu$, called the Plancherel measure, that allows for the Fourier inversion formula. Furthermore, the operator $\pi(f)$ is in the Hilbert-Schmidt class, and its Hilbert-Schmidt norm depends only on the class of $\pi$; the map \[\widehat{{\G}} \ni \pi \mapsto \|\pi(f)\|^{2}_{\textnormal{HS}}\] is integrable against $\mu$ and we have the following isometry, known as the Plancherel formula%
		\begin{equation}\label{planc.id}
		\int_{\G} |f(x)|^2\,dx=\int_{\widehat{{\G}}}\|\pi(f)\|^{2}_{\textnormal{HS}}\,d\mu(\pi)\,.
		\end{equation}
For a detailed discussion on this topic we refer to [Section 1.8, Appendix
B.2 in \cite{FR16}].
\par Finally, since the action of a Rockland operator $\R$ is involved in our analysis, let us make a brief overview of some related properties.
\begin{definition}[Homogeneous Sobolev spaces]
For $s>0$, $p>1$, and $\R$ a positive homogeneous Rockland operator of degree $\nu$, we define the $\R$-Sobolev spaces as the space of tempered distributions $\mathcal{S}^{'}(\G)$ obtained by the completion of $\mathcal{S}(\G)\cap \textnormal{Dom}(\R^{\frac{s}{\nu}})$ for the norm
\[
\|f\|_{\dot{L}^{p}_{s}(\G)}:=\|\R^{\frac{s}{\nu}}_{p}f\|_{L^p(\G)}\,,\quad f \in \mathcal{S}(\G)\cap \textnormal{Dom}(\R^{\frac{s}{\nu}}_{p})\,,
\]
where $\R_p$ is the maximal restriction of $\R$ to $L^{p}(\G)$.\footnote{When $p=2$, we will write $\R_2=\R$ for the self-adjoint extension of $\R$ on $L^2(\G)$.}
\end{definition}
Let us mention that, the above $\R$-Sobolev spaces do not depend on the specific choice of $\R$, in the sense that, different choices of the latter produce equivalent norms, see [Proposition 4.4.20 in \cite{FR16}]. 
\par In the scale of these Sobolev spaces, we recall the next proposition as in [Proposition 4.4.13 in \cite{FR16}].
\begin{proposition}[Sobolev embeddings]
For $1<\tilde{q}_0<q_0<\infty$ and for $a,b \in \mathbb{R}$ such that 
\[
b-a=Q \left( \frac{1}{\tilde{q}_0}-\frac{1}{q_0}\right)\,,
\]
we have the continuous inclusions 
\[
\dot{L}^{\tilde{q}_0}_{b}(\G) \subset \dot{L}^{q_0}_{a}(\G)\,,
\]
that is, for every 
$f \in \dot{L}^{\tilde{q}_0}_{b}(\G)$, we have $f \in \dot{L}^{q_0}_{a}(\G)$, and there exists some positive constant $C=C(\tilde{q}_0,q_0,a,b)$ (independent of $f$) such that 
\begin{equation}
\label{inclusions}
\|f\|_{\dot{L}^{q_0}_{a}(\G)}\leq C \|f\|_{\dot{L}^{\tilde{q}_0}_{b}(\G)}\,.
\end{equation}
\end{proposition}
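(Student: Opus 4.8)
This is Proposition 4.4.13 in \cite{FR16}; the plan is to reduce the embedding to the boundedness of a Riesz potential and then to invoke a Hardy--Littlewood--Sobolev inequality on $\G$, viewed as a space of homogeneous type. First I would remove the lower index: for every $p\in(1,\infty)$ and every $c\in\mathbb R$, the ($L^p$-realisation of the) operator $\R^{a/\nu}$ is an isometric isomorphism $\dot L^p_c(\G)\to\dot L^p_{c-a}(\G)$, directly from the definition of the homogeneous Sobolev norm and the independence of these spaces of the chosen Rockland operator. Transporting the claimed inclusion through $\R^{-a/\nu}$ on both sides reduces it to
\[
\dot L^{\tilde q_0}_{\beta}(\G)\subset L^{q_0}(\G),\qquad \beta:=b-a=Q\Big(\tfrac{1}{\tilde q_0}-\tfrac{1}{q_0}\Big)\in(0,Q),
\]
that is, to the inequality $\|f\|_{L^{q_0}(\G)}\le C\,\|\R^{\beta/\nu}f\|_{L^{\tilde q_0}(\G)}$ for $f$ in the dense subspace $\mathcal S(\G)\cap\mathrm{Dom}(\R^{\beta/\nu})$; equivalently, that the Riesz potential $\R^{-\beta/\nu}$ maps $L^{\tilde q_0}(\G)$ boundedly into $L^{q_0}(\G)$.

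Next I would realise $\R^{-\beta/\nu}$ as a subordinated heat operator. Since $\R$ is a positive Rockland operator, $-\R$ generates the heat semigroup $e^{-t\R}$, which acts by convolution with a kernel $h_t$; by homogeneity of $\R$ one has the scaling $h_t(x)=t^{-Q/\nu}h_1(D_{t^{-1/\nu}}x)$, and $h_1$ is a Schwartz function on $\G$ with the usual Gaussian-type bounds (the heat kernel theory for Rockland operators, cf. \cite{FR16} and the references therein). From the identity
\[
\R^{-\beta/\nu}=\frac{1}{\Gamma(\beta/\nu)}\int_0^\infty t^{\beta/\nu-1}\,e^{-t\R}\,dt,
\]
valid for $0<\beta<Q$, it follows that $\R^{-\beta/\nu}$ is convolution with $K_\beta(x)=\frac{1}{\Gamma(\beta/\nu)}\int_0^\infty t^{\beta/\nu-1}h_t(x)\,dt$. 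Rescaling $t\mapsto |x|^\nu t$ in a fixed homogeneous quasi-norm $|\cdot|$ and using the bounds on $h_1$ (rapid decay controls the small-$t$ part, the value $h_1(0)$ the large-$t$ part) gives the pointwise estimate $|K_\beta(x)|\le C\,|x|^{\beta-Q}$, so that $K_\beta\in L^{Q/(Q-\beta),\infty}(\G)$.

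Finally I would apply the Hardy--Littlewood--Sobolev inequality on the space of homogeneous type $(\G,|\cdot|,dx)$: convolution with a kernel in weak-$L^{r}$, $r=Q/(Q-\beta)$, is bounded from $L^{\tilde q_0}(\G)$ to $L^{q_0}(\G)$ whenever $1<\tilde q_0<q_0<\infty$ and $\tfrac1{q_0}=\tfrac1{\tilde q_0}+\tfrac1r-1=\tfrac1{\tilde q_0}-\tfrac{\beta}{Q}$, which is precisely the relation $b-a=Q(\tfrac1{\tilde q_0}-\tfrac1{q_0})$. This yields the desired estimate on $\mathcal S(\G)\cap\mathrm{Dom}(\R^{\beta/\nu})$, and since that subspace is dense in $\dot L^{\tilde q_0}_\beta(\G)$ by construction, a completion argument gives the continuous inclusion \eqref{inclusions} with $C=C(\tilde q_0,q_0,a,b)$.

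I expect the second step to be the main obstacle: one needs the Schwartz property and scaling of the heat kernel for a \emph{general} positive Rockland operator, not just for a sub-Laplacian, which is where hypoellipticity and homogeneity are genuinely used, and then the pointwise bound $|K_\beta(x)|\lesssim|x|^{\beta-Q}$ has to be extracted carefully both near $x=0$ and at infinity. By contrast, the reduction in the first step and the Hardy--Littlewood--Sobolev step are standard once one works in the homogeneous-group framework; note that the strict inequalities $\tilde q_0>1$ and $q_0<\infty$ are exactly what keeps one away from the forbidden endpoints of the weak-type interpolation.
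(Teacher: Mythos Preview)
The paper does not actually prove this proposition: it is simply recalled from \cite{FR16} (Proposition 4.4.13) and used as a black box in the subsequent estimates. So there is no ``paper's own proof'' to compare against.

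That said, your sketch is correct and is essentially the argument given in \cite{FR16}: reduce to $a=0$ by the isometry $\R^{a/\nu}:\dot L^p_c\to\dot L^p_{c-a}$, identify the remaining embedding with the $L^{\tilde q_0}\to L^{q_0}$ boundedness of the Riesz potential $\R^{-\beta/\nu}$ for $\beta=b-a\in(0,Q)$, realise the latter as convolution with a kernel of homogeneous degree $\beta-Q$ via the heat-semigroup subordination formula, and conclude by Hardy--Littlewood--Sobolev on the homogeneous group. Your identification of the delicate point---that the Schwartz property and scaling of the heat kernel must be established for a general positive Rockland operator, not just a sub-Laplacian---is accurate; this is indeed where the hypoellipticity (via Helffer--Nourrigat) and the functional calculus developed in \cite{FR16} enter. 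The restriction $0<\beta<Q$ needed for the kernel integral to converge is guaranteed by $1<\tilde q_0<q_0<\infty$, as you note.
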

\par In the sequel we will make use of the following notation:
\begin{notation}
\begin{itemize}
    \item When we write $a \lesssim b$, we will mean that there exists some constant $c>0$ (independent of any involved parameter) such that $a \leq c b$;
    \item if $\alpha=(\alpha_1,\cdots,\alpha_n) \in \mathbb{N}^n$ is some multi-index, then we denote by 
    \[
    [\alpha]=\sum_{i=1}^{n} v_i\alpha_i\,,
    \]
    its \textit{homogeneous length}, where the $v_i$'s stand for the dilations' weights as in \eqref{dw}, and by 
    \[
    |\alpha|=\sum_{i=1}^{n}\alpha_i\,,
    \]
    the length of it;
    \item for suitable $f \in \mathcal{S}^{'}(\G)$  we have introduced the following norm
    \[
    \|f\|_{H^{s}(\G)}:=\|f\|_{\dot{L}^{2}_{s}(\G)}+\|f\|_{L^2(\G)}\,;
    \]
    \item when regulisations of functions/distributions on $\G$ are considered, they must be regarded as arising  via  convolution  with  Friedrichs-mollifiers;  that is, $\psi$ is a Friedrichs-mollifier, if it is a compactly supported smooth function with $\int_{\G}\psi\,dx=1$. Then the regularising net is defined as
\begin{equation}\label{mol}
	\psi_{\epsilon}(x)=\epsilon^{-Q}\psi(D_{\epsilon^{-1}}(x))\,,\quad \epsilon \in (0,1]\,,
\end{equation}
where $Q$ is the homogeneous dimension of $\G$.
\end{itemize}
\end{notation}

\section{Estimates for the classical solution}

Here and thereafter, we consider a fixed power $s>0$ of a fixed, positive  Rockland operator $\R$ that is assumed to be of homogeneous degree $\nu$. Moreover, the coefficient $p$ in \eqref{sch.eq} will be regarded to be non-negative on $\G$.
\par The next two propositions prove the existence and uniqueness of the classical solution to the Cauchy problem \eqref{sch.eq}, in the cases where the potential $p$ is in the space $ L^{\infty}(\G)$ or $ L^{\frac{2Q}{\nu s}}(\G)$, where, in the second case, the condition $Q> \nu s$ must be satisfied.
\begin{proposition}\label{prop1.clas.schr}
Let $p \in L^{\infty}(\G)$, where $p \geq 0$, and suppose that $u_0 \in H^{\frac{s \nu}{2}}(\G)$. Then, there exists a unique solution $u \in C([0,T];H^{\frac{s \nu}{2}}(\G))$ to the Cauchy problem \eqref{sch.eq}, that satisfies the estimate 
	\begin{equation}\label{prop1.claim}
		\|u(t,\cdot)\|_{H^{\frac{s \nu}{2}}(\G)} \lesssim (1+\|p\|_{L^{\infty}(\G)}) \|u_0\|_{H^{\frac{s \nu}{2}}(\G)}\,,
		\end{equation}
		uniformly in $t \in [0,T]$.
\end{proposition}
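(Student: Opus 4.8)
The plan is to construct the solution as $u(t,\cdot)=e^{itH}u_0$, where $H:=\R^s+p$ is the full (time-independent) Hamiltonian viewed as a self-adjoint operator on $L^2(\G)$, and then to deduce \eqref{prop1.claim} from conservation of the associated energy. Since $\R$ is a positive self-adjoint operator on $L^2(\G)$, the spectral calculus recalled in the Introduction gives that $\R^s$ is positive and self-adjoint, with domain $\mathrm{Dom}(\R^s)$ and form domain $\mathrm{Dom}(\R^{s/2})$. As $p\in L^\infty(\G)$ with $p\ge0$, multiplication by $p$ is a bounded, self-adjoint, non-negative operator on $L^2(\G)$, so $H$ is self-adjoint on $\mathrm{Dom}(\R^s)$ and non-negative; by the spectral theorem (Stone's theorem) it generates a strongly continuous unitary group $\{e^{itH}\}_{t\in\mathbb R}$ on $L^2(\G)$. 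Rewriting \eqref{sch.eq} as $iu_t=-Hu$, one checks that $u(t,\cdot):=e^{itH}u_0$ solves the Cauchy problem in $C([0,T];L^2(\G))$, the equation being understood in $H^{-s\nu/2}(\G)$ (which makes sense because $\R^s\colon H^{s\nu/2}(\G)\to H^{-s\nu/2}(\G)$ and $p\colon L^2(\G)\to L^2(\G)\hookrightarrow H^{-s\nu/2}(\G)$ are bounded). Uniqueness within $C([0,T];H^{s\nu/2}(\G))$ follows from the usual energy identity: if $w$ is the difference of two such solutions with the same data, pairing $iw_t+\R^s w+pw=0$ with $w$ in $L^2(\G)$ and taking imaginary parts annihilates the real quantities $\langle\R^s w,w\rangle_{L^2(\G)}$ and $\langle pw,w\rangle_{L^2(\G)}$, so that $\frac{d}{dt}\|w(t)\|_{L^2(\G)}^2=2\,\mathrm{Re}\langle w_t(t),w(t)\rangle_{L^2(\G)}=0$, whence $w\equiv0$.

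Next, I will extract the estimate and the $H^{s\nu/2}$-regularity from conservation of energy. For $f\in\mathrm{Dom}(\R^{s/2})$ the quadratic form of $H$ is $\|H^{1/2}f\|_{L^2(\G)}^2=\|\R^{s/2}f\|_{L^2(\G)}^2+\langle pf,f\rangle_{L^2(\G)}$, and since $0\le\langle pf,f\rangle_{L^2(\G)}\le\|p\|_{L^\infty(\G)}\|f\|_{L^2(\G)}^2$ we get
\[
\|\R^{s/2}f\|_{L^2(\G)}^2\le\|H^{1/2}f\|_{L^2(\G)}^2\le\|\R^{s/2}f\|_{L^2(\G)}^2+\|p\|_{L^\infty(\G)}\|f\|_{L^2(\G)}^2 ;
\]
in particular $\mathrm{Dom}(H^{1/2})=\mathrm{Dom}(\R^{s/2})$, so $u_0\in H^{s\nu/2}(\G)$ lies in the form domain of $H$. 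Since $e^{itH}$ is unitary and commutes with $H^{1/2}$, we have $\|H^{1/2}u(t)\|_{L^2(\G)}=\|H^{1/2}u_0\|_{L^2(\G)}$ and $\|u(t)\|_{L^2(\G)}=\|u_0\|_{L^2(\G)}$ for all $t$. Using the left inequality above for $f=u(t)$, this invariance, the right inequality for $f=u_0$, and $a^2+b^2\le(a+b)^2\le2(a^2+b^2)$ for $a,b\ge0$, we obtain
\[
\|u(t,\cdot)\|_{H^{s\nu/2}(\G)}^2\le 2\big(\|\R^{s/2}u(t)\|_{L^2(\G)}^2+\|u(t)\|_{L^2(\G)}^2\big)\le 2\big(\|H^{1/2}u_0\|_{L^2(\G)}^2+\|u_0\|_{L^2(\G)}^2\big)\le 2\,(1+\|p\|_{L^\infty(\G)})\,\|u_0\|_{H^{s\nu/2}(\G)}^2 ;
\]
taking square roots and using $(1+\|p\|_{L^\infty(\G)})^{1/2}\le1+\|p\|_{L^\infty(\G)}$ gives \eqref{prop1.claim}, uniformly in $t\in[0,T]$. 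The continuity $u\in C([0,T];H^{s\nu/2}(\G))$ follows because $e^{itH}$ commutes with $(I+H)^{1/2}$ and $t\mapsto e^{itH}v$ is $L^2(\G)$-continuous for each fixed $v$, while the graph norm of $(I+H)^{1/2}$ is equivalent to $\|\cdot\|_{H^{s\nu/2}(\G)}$ by the displayed norm comparison.

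The only genuine difficulty here is functional-analytic bookkeeping rather than a hard estimate: one has to check carefully that the energy $\|H^{1/2}u(t)\|_{L^2(\G)}$ is conserved along the group — immediate from the spectral calculus for the self-adjoint operator $H$ — and, more importantly, that the form domain of $H$ coincides with $\mathrm{Dom}(\R^{s/2})$ with comparable norms, so that the conserved quantity genuinely controls the inhomogeneous Sobolev norm $\|\cdot\|_{H^{s\nu/2}(\G)}$ appearing in \eqref{prop1.claim}; here one also uses the property, recalled from \cite{FR16}, that the $\R$-Sobolev spaces are independent of the chosen Rockland operator. The hypothesis $p\ge0$ enters decisively at this step, since it permits discarding $\langle pu,u\rangle_{L^2(\G)}\ge0$ on one side of the chain while bounding it by $\|p\|_{L^\infty(\G)}\|u_0\|_{L^2(\G)}^2$ on the other. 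Note that a Duhamel/contraction argument carried out directly in $H^{s\nu/2}(\G)$ would fail, since multiplication by a merely bounded $p$ need not be bounded on $H^{s\nu/2}(\G)$ when $s\nu/2>0$; it is precisely this loss of derivatives that makes the energy method the appropriate tool.
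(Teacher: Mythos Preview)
Your proof is correct and takes a genuinely different route from the paper's. The paper also starts from the energy identity $E(t)=\|\R^{s/2}u(t)\|_{L^2}^2+\|\sqrt{p}\,u(t)\|_{L^2}^2=E(0)$ (obtained by pairing the equation with $u_t$ rather than via the spectral calculus for $H^{1/2}$), which yields the $\dot L^2_{s\nu/2}$ part of \eqref{prop1.claim} just as your form--domain comparison does. The main difference is in how $\|u(t)\|_{L^2(\G)}$ is controlled: you read it off directly from the unitarity of $e^{itH}$, whereas the paper treats $f:=-pu$ as a source term, applies the group Fourier transform to diagonalise $\R^s$, solves the resulting scalar ODEs $i\widehat{u}_t(t,\pi)_{k,l}+\pi_k^{2s}\widehat{u}(t,\pi)_{k,l}=\widehat{f}(t,\pi)_{k,l}$ by Duhamel, and then recovers $\|u(t)\|_{L^2}^2\lesssim\|u_0\|_{L^2}^2+\int_0^T\|pu\|_{L^2}^2$ via the Plancherel formula and the already established bound on $\|\sqrt{p}\,u\|_{L^2}$. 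Your argument is shorter, avoids the Fourier-side machinery entirely, and even produces the slightly sharper constant $(1+\|p\|_{L^\infty})^{1/2}$; the paper's approach, on the other hand, showcases the Rockland spectral decomposition and the Plancherel theory on $\widehat{\G}$, and its Duhamel representation is precisely the template re-used in the later very-weak uniqueness and consistency proofs. (The $L^2$-conservation you invoke is in fact proved separately later in the paper, as Lemma~\ref{techn.lem}.)
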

\begin{proof}
Multiplying the equation \eqref{sch.eq} by $u_t$ and integrating over $\mathbb{G}$, we get
	\begin{equation}\label{Re0,sch}
		\Re(\langle iu_{t}(t,\cdot),u_t(t,\cdot)\rangle_{L^2(\mathbb{G})}+\langle \mathcal{R}^{s}u(t,\cdot),u_t(t,\cdot)\rangle_{L^2(\G)}+\langle p(\cdot)u(t,\cdot),u_t(t,\cdot) \rangle_{L^2(\G)})=0\,,
	\end{equation}
for all $t \in [0,T]$. It is easy to see that 

\[
\Re(\langle \mathcal{R}^{s}u(t,\cdot),u_t(t,\cdot)\rangle_{L^2(\G)})=\frac{1}{2} \partial_{t}\| \R^{\frac{s}{2}}u(t,\cdot)\|_{L^2(\G)}^{2}\,,
\]
and
\[
\Re(\langle p(\cdot)u(t,\cdot),u_t(t,\cdot) \rangle_{L^2(\G)})=\frac{1}{2} \partial_{t} \| \sqrt{p}(\cdot)u(t,\cdot)\|_{L^2(\G)}^{2}\,,
\]
so that, denoting by  $$E(t):=\|\R^{\frac{s}{2}}u(t,\cdot)\|\L+\|\sqrt{p}(\cdot)u(t,\cdot)\|\L,$$ the real part of the functional estimate of \eqref{Re0,sch}, equation \eqref{Re0,sch} implies that $\partial_{t}E(t)=0$, and consequently also that \begin{equation}\label{et=e0}E(t)=E(0)\,,\quad \text{for all}\quad t \in [0,T]\,.
\end{equation}
Therefore, taking into consideration the estimate
\[
\|\sqrt{p}u_0\|\L \leq \|p\|_{L^{\infty}(\G)} \|u_0\|\L\,,
\]
the equation \eqref{et=e0} implies that for all $t \in [0,T]$ we have 
\begin{equation}\label{pu.1.est}
\|\sqrt{p}u(t,\cdot)\|\L \lesssim \|\R^{\frac{s}{2}}u_0\|\L+\|p\|_{L^{\infty}}\|u_0\|\L\,,
\end{equation}
and 
\begin{equation}\label{Ru.1.est}
\|\R^{\frac{s}{2}}u(t,\cdot)\|\L \lesssim \|\R^{\frac{s}{2}}u_0\|\L+\|p\|_{L^{\infty}}\|u_0\|\L\,.
\end{equation}
Now since 
\[
\|\R^{\frac{s}{2}}u_0\|\L\,, \|u_0\|\L \leq \|u_0\|^{2}_{H^{\frac{s \nu}{2}}(\G)}\,,
\]
we can estimate \eqref{pu.1.est} and \eqref{Ru.1.est} further by 
\begin{equation}\label{pu.2.est}
    \|\sqrt{p}u(t,\cdot)\|_{L^2(\G)} \lesssim \left( 1+\|p\|^{\frac{1}{2}}_{L^{\infty}(\G)}\right) \|u_0\|_{H^{\frac{s \nu}{2}}(\G)}\,,
\end{equation}
and
\begin{equation}\label{Ru.2.est}
    \|\R^{\frac{s}{2}}u(t,\cdot)\|_{L^2(\G)}\lesssim \left( 1+\|p\|^{\frac{1}{2}}_{L^{\infty}(\G)}\right) \|u_0\|_{H^{\frac{s \nu}{2}}(\G)}\,,
\end{equation}
respectively.

Now, to prove \eqref{prop1.claim}, it remains to show the desired estimate for the norm $\|u(t,\cdot)\|_{L^2(\G)}$. To this end, we first apply the group Fourier transform to \eqref{sch.eq} with respect to $x \in \G$ and for all $\pi \in \widehat{\G}$, and we get 
\begin{equation}
	\label{prop1.ft}
	i\widehat{u}_{t}(t,\pi)+\pi(\R)^s\, \widehat{u}(t,\pi)=\widehat{f}(t,\pi);\quad \widehat{u}(0,\pi)_{k,l}=\widehat{u}_0(\pi)_{k,l}\,,
\end{equation}
where $\widehat{f}(t,\pi)$ denotes the group Fourier transform of the function $f(t,x):=-p(x)u(t,x)$. Taking into account the matrix representation of $\pi(\R)$, we rewrite the matrix equation \eqref{prop1.ft} componentwise as the infinite system of equations of the form 
\begin{equation}
	\label{prop1.ft2}
	i\widehat{u}_{t}(t,\pi)_{k,l}+\pi^{2s}_{k}\cdot \widehat{u}(t,\pi)_{k,l}=\widehat{f}(t,\pi)_{k,l}\,,
\end{equation}
 for all $\pi \in \widehat{\G}$ and for any $k,l \in \mathbb{N}$, where now $\widehat{f}(t,\pi)_{k,l}$ can be regarded as the source term of the second order differential equation as in \eqref{prop1.ft2}.
\par  Now, let us decouple the matrix equation in \eqref{prop1.ft2} by fixing $\pi \in \widehat{\G}$, and treat each of the equations represented in \eqref{prop1.ft2} individually. If we denote by 
\[
v(t):=\widehat{u}(t,\pi)_{k,l}\,,\beta^{2s}:=\pi^{2s}_{k}\,,f(t):=\widehat{f}(t,\pi)_{k,l}\quad \text{and}\quad v_0:=\widehat{u}_0(\pi)_{k,l}\,,
\]  
then \eqref{prop1.ft2} becomes
\begin{equation}\label{prop1.fix.pi}
	iv^{'}(t)+\beta^{2s}\cdot v(t)=f(t);\,
	v(0)=v_0\,,
\end{equation}
with $\beta>0.$
By solving first the homogeneous version of \eqref{prop1.fix.pi}, and then by applying Duhamel's principle (see e.g. \cite{Eva98}), we get the following representation of the solution of \eqref{prop1.fix.pi}
\[
v(t)=v_0\,\textnormal{exp}(-i \beta^{2s}t)+\int_{0}^{t}\textnormal{exp}(-i \beta^{2s}(t-s))f(s)\,ds\,.
\]
Therefore, if we substitute back our initial conditions in $t$, then we get the estimate
\begin{equation}\label{b.HS}
|\widehat{u}(t,\pi)_{k,l}|^2 \lesssim |\widehat{u}_0(\pi)_{k,l}|^2+\int_{0}^{T}|\widehat{f}(t,\pi)_{k,l}|^2\,dt\,,
\end{equation}
which holds uniformly in $\pi \in \widehat{\G}$ and for each $k,l \in \mathbb{N}$, where we have used that $L^2([0,T])\subset L^1([0,T])$.
Now, recall that since for any Hilbert-Schmidt operator $A$ one has 
\[
\|A\|^{2}_{\textnormal{HS}}=\sum_{k,l}|\langle A \varphi_{k},\varphi_{l}\rangle|^2\,,
\]
where $\{\varphi_1,\varphi_2,\cdots\}$ is some orthonormal basis, summing over $k,l \in \mathbb{N}$ the inequalities  \eqref{b.HS} we get 
\[
\|\widehat{u}(t,\pi)\|^{2}_{\textnormal{HS}}\lesssim \|\widehat{u}_0(\pi)\|^{2}_{\textnormal{HS}}+\sum_{k,l}\int_{0}^{T}|\widehat{f}(t,\pi)_{k,l}|^2\,dt\,.
\]
Next we integrate the last inequality with respect to the Plancherel measure $\mu$ on $\widehat{\G}$, so that using the Plancherel identity \eqref{planc.id}, we obtain 
\begin{equation}\label{before.Pl}
	\|u(t,\cdot)\|\L \lesssim  \|u_0\|\L+\int_{\G} \sum_{k,l}\int_{0}^{T}|\widehat{f}(t,\pi)_{k,l}|^2\,dt\,d\mu(\pi)\,,
\end{equation}
and if we use Lebesgue's dominated convergence theorem, Fubini's theorem and the Plancherel formula we have
\begin{equation}\label{F,dc,Pl}
\int_{\G} \sum_{k,l}\int_{0}^{T}|\widehat{f}(t,\pi)_{k,l}|^2\,dt\,d\mu=\int_{0}^{T}\int_{\G}\sum_{k,l}|\widehat{f}(t,\pi)_{k,l}|^2\,d\mu\,dt=\int_{0}^{T}\|f(t,\cdot)\|_{L^2(\G)}^{2}\,dt\,.
\end{equation}
Now, since $f(t,x)=-p(x)u(t,x)$, using the estimate \eqref{pu.2.est} we get
\begin{equation}\label{est.f.p}
 \|f(t,\cdot)\|_{L^2(\G)}\leq \|p\|^{\frac{1}{2}}_{L^{\infty}(\G)}\|\sqrt{p}u(t,\cdot)\|_{L^2(\G)}\lesssim \left(1+\|p\|_{L^{\infty}(\G)} \right)\|u_0\|_{H^{\frac{s \nu}{2}}(\G)}\,,
\end{equation}
so that by \eqref{before.Pl} we arrive at 
\begin{equation}
    \label{prop1.est.u}
    \|u(t,\cdot)\|_{L^2(\G)}\lesssim \left(1+\|p\|_{L^{\infty}(\G)} \right)\|u_0\|_{H^{\frac{s \nu}{2}}(\G)}\,.
\end{equation}

Finally, combining the inequalities \eqref{Ru.2.est} and \eqref{prop1.est.u} we get 
\begin{eqnarray}\label{prop1.est.B}
	\|u(t,\cdot)\|_{L^2(\G)} & \lesssim &  (1+\|p\|_{L^\infty(\G)})\|u_0\|_{H^{\frac{s \nu}{2}}(\G)}\,,
	\end{eqnarray}
uniformly in $t \in [0,T]$, and this shows the estimate \eqref{prop1.claim} while the uniqueness of $u$ also follows. This completes the proof of Proposition \ref{prop1.clas.schr}. 
\end{proof}
\begin{proposition}\label{prop2}
Assume that $Q >\nu s$, and let $p \in L^{\frac{2Q}{\nu s}}(\G)\cap L^{\frac{Q}{\nu s}}(\G)$, $p \geq 0$. If we suppose that $u_0 \in H^{\frac{s \nu}{2}}(\G)$, then there exists a unique solution $u \in C([0,T];H^{\frac{s \nu}{2}}(\G))$ to the Cauchy problem \eqref{sch.eq} satisfying the estimate 
\begin{equation}
\label{prop2.claim}
\|u(t,\cdot)\|_{H^{\frac{s \nu}{2}}(\G)}\lesssim \|u_0\|_{H^{\frac{s \nu}{2}}(\G)}  \left\{\left(1+\|p\|_{L^{\frac{2Q}{\nu s}}(\G)}\right) \left(1+\|p\|_{L^{\frac{Q}{\nu s}}(\G)}\right)^{\frac{1}{2}}\right\}\,,
\end{equation}
 uniformly in $t \in [0,T]$.
\end{proposition}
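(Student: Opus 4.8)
The plan is to follow the two-step scheme of the proof of Proposition \ref{prop1.clas.schr}: an energy identity controlling $\|\R^{\frac{s}{2}}u(t,\cdot)\|_{L^2(\G)}$ together with $\|\sqrt{p}\,u(t,\cdot)\|_{L^2(\G)}$, followed by a Fourier-side Duhamel argument controlling $\|u(t,\cdot)\|_{L^2(\G)}$. The only genuinely new input is that, since $p$ is no longer bounded, every occurrence of $\|p\|_{L^\infty(\G)}$ has to be replaced by a combination of Hölder's inequality with the Sobolev embedding recalled above, applied with $\tilde q_0=2$, $b=\frac{\nu s}{2}$, $a=0$, hence $q_0=\frac{2Q}{Q-\nu s}$: the scaling relation $b-a=Q(\frac{1}{\tilde q_0}-\frac{1}{q_0})$ holds identically, and the hypothesis $Q>\nu s$ is precisely what keeps $q_0$ finite. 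Concretely, I will repeatedly use that $\|v\|_{L^{2Q/(Q-\nu s)}(\G)}\lesssim\|\R^{\frac{s}{2}}v\|_{L^2(\G)}\le\|v\|_{H^{\frac{\nu s}{2}}(\G)}$.

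First I would multiply \eqref{sch.eq} by $\overline{u_t}$, integrate over $\G$ and take real parts, obtaining exactly as before that $E(t):=\|\R^{\frac{s}{2}}u(t,\cdot)\|_{L^2(\G)}^2+\|\sqrt{p}\,u(t,\cdot)\|_{L^2(\G)}^2$ is independent of $t$. To bound $E(0)$ I estimate the initial potential energy by Hölder's inequality with the conjugate exponents $\frac{Q}{\nu s}$ and $\frac{Q}{Q-\nu s}$, namely
\[
\|\sqrt{p}\,u_0\|_{L^2(\G)}^2=\int_{\G}p\,|u_0|^2\le\|p\|_{L^{\frac{Q}{\nu s}}(\G)}\|u_0\|_{L^{\frac{2Q}{Q-\nu s}}(\G)}^2\lesssim\|p\|_{L^{\frac{Q}{\nu s}}(\G)}\|u_0\|_{H^{\frac{\nu s}{2}}(\G)}^2\,,
\]
so that $E(0)\lesssim(1+\|p\|_{L^{\frac{Q}{\nu s}}(\G)})\|u_0\|_{H^{\frac{\nu s}{2}}(\G)}^2$. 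From $E(t)=E(0)$ this gives, uniformly in $t\in[0,T]$,
\[
\|\R^{\frac{s}{2}}u(t,\cdot)\|_{L^2(\G)}\lesssim(1+\|p\|_{L^{\frac{Q}{\nu s}}(\G)})^{\frac12}\|u_0\|_{H^{\frac{\nu s}{2}}(\G)}\,,
\]
which already accounts for the factor $(1+\|p\|_{L^{Q/(\nu s)}(\G)})^{1/2}$ in \eqref{prop2.claim}.

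Next, for $\|u(t,\cdot)\|_{L^2(\G)}$ I would run the Fourier analysis of Proposition \ref{prop1.clas.schr} essentially verbatim: take the group Fourier transform of \eqref{sch.eq}, decouple the matrix system into scalar equations $iv'+\beta^{2s}v=f$, solve by Duhamel, square, sum over the matrix entries and integrate against the Plancherel measure to reach $\|u(t,\cdot)\|_{L^2(\G)}^2\lesssim\|u_0\|_{L^2(\G)}^2+\int_0^T\|f(t,\cdot)\|_{L^2(\G)}^2\,dt$ with $f(t,x)=-p(x)u(t,x)$. The source term is then controlled by Hölder's inequality in the form $\|gh\|_{L^2(\G)}\le\|g\|_{L^{2Q/(\nu s)}(\G)}\|h\|_{L^{2Q/(Q-\nu s)}(\G)}$, the Sobolev bound above and the step-one estimate:
\[
\|f(t,\cdot)\|_{L^2(\G)}\le\|p\|_{L^{\frac{2Q}{\nu s}}(\G)}\|u(t,\cdot)\|_{L^{\frac{2Q}{Q-\nu s}}(\G)}\lesssim\|p\|_{L^{\frac{2Q}{\nu s}}(\G)}(1+\|p\|_{L^{\frac{Q}{\nu s}}(\G)})^{\frac12}\|u_0\|_{H^{\frac{\nu s}{2}}(\G)}\,.
\]
Inserting this into the Plancherel inequality (the fixed length $T$ being absorbed into $\lesssim$) and using $\|u_0\|_{L^2(\G)}\le\|u_0\|_{H^{\nu s/2}(\G)}$ gives $\|u(t,\cdot)\|_{L^2(\G)}\lesssim(1+\|p\|_{L^{2Q/(\nu s)}(\G)})(1+\|p\|_{L^{Q/(\nu s)}(\G)})^{1/2}\|u_0\|_{H^{\nu s/2}(\G)}$, and adding this to the step-one bound for $\|\R^{s/2}u(t,\cdot)\|_{L^2(\G)}$ produces exactly \eqref{prop2.claim}. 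Uniqueness then follows from linearity together with \eqref{prop2.claim}, and existence is obtained by the standard approximation argument, as in Proposition \ref{prop1.clas.schr}.

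The point that needs care is the exponent bookkeeping: the quadratic term $\int_{\G}p|u|^2$ and the linear term $p\,u$ call for different Hölder splittings ($\frac{Q}{\nu s}$ against $\frac{Q}{Q-\nu s}$, respectively $\frac{2Q}{\nu s}$ against $\frac{2Q}{Q-\nu s}$), yet in both cases the factor containing $u$ must land on the single Sobolev target exponent $\frac{2Q}{Q-\nu s}$ of $\dot{L}^2_{\nu s/2}(\G)$ — finite precisely when $Q>\nu s$ — and this is what forces both norms $\|p\|_{L^{Q/(\nu s)}(\G)}$ and $\|p\|_{L^{2Q/(\nu s)}(\G)}$ to appear in the statement.
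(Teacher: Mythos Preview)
Your proof is correct and follows essentially the same approach as the paper: the energy identity $E(t)=E(0)$ together with H\"older's inequality (exponent $\tfrac{Q}{\nu s}$) and the Sobolev embedding $\dot L^2_{\nu s/2}(\G)\hookrightarrow L^{2Q/(Q-\nu s)}(\G)$ to control $\|\R^{s/2}u(t,\cdot)\|_{L^2}$, and then the Fourier--Duhamel argument of Proposition~\ref{prop1.clas.schr} combined with H\"older (exponent $\tfrac{2Q}{\nu s}$) and the same embedding to bound $\|pu(t,\cdot)\|_{L^2}$ and hence $\|u(t,\cdot)\|_{L^2}$. The paper abbreviates the second step by writing ``using arguments similar to those we developed in Proposition~\ref{prop1.clas.schr}'', but the content is identical; your closing remark on why both exponents $\tfrac{Q}{\nu s}$ and $\tfrac{2Q}{\nu s}$ are forced is a welcome clarification.
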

\begin{proof}
Proceeding as in the proof of Proposition \ref{prop1.clas.schr}, we have 
\begin{equation}\label{prop2.et}
E(t)=E(0)\,,\quad \forall t \in [0,T]\,,
\end{equation}
where the energy estimate $E$ is given by 
\[
E(t)=\|\R^{\frac{s}{2}}u(t,\cdot)\|\L+\|\sqrt{p}(\cdot)u(t,\cdot)\|\L\,.
\]
Now, applying H\"older's inequality, we get 
\begin{equation}
\label{prop2.Hold}
 \|\sqrt{p} u_0\|\L \leq \|p\|_{L^{q^{'}}(\G)}\|u_{0}\|^{2}_{L^{2q}(\G)},
\end{equation}
where $1<q,q^{'}<\infty$, and $(q,q^{'})$ conjugate exponents, to be chosen later. Observe that if we apply \eqref{inclusions} for $u_0 \in H^{\frac{s \nu}{2}}(\G)$, $b=\frac{s \nu}{2}$, $a=0$, and $q_0=\frac{2Q}{Q-\nu s}$, then $\tilde{q}_0=2$, and we have 
\begin{equation}
\label{prop2.hold1}
\|u_0\|_{L^{q_{0}}(\G)} \lesssim \|\R^{\frac{s}{2}}u_0\|_{L^2(\G)}<\infty\,.
\end{equation}
Choosing $2q=q_0$ in \eqref{prop2.Hold} so that $q=\frac{Q}{Q-\nu s}$, we get $q^{'}=\frac{Q}{\nu s}$, so that 
\begin{equation}
\label{emb}
\|\sqrt{p}u_0\|\L \lesssim \|p\|_{L^{\frac{Q}{\nu s}}(\G)}\|\R^{\frac{s}{2}}u_0\|_{L^2(\G)}^{2}<\infty\,,
\end{equation}
and by \eqref{prop2.et} we can estimate
\begin{eqnarray}
\label{prop2.est.m}
\|\sqrt{p}(\cdot)u(t,\cdot)\|\L  & \leq & \|u_0\|_{H^{\frac{s \nu}{2}}(\G)}^{2}+\|\sqrt{p}u_0\|\L\nonumber\\
& \lesssim & \|u_0\|_{H^{\frac{s \nu}{2}}(\G)}^{2}+\|p\|_{L^{\frac{Q}{\nu s}}(\G)}\|u_0\|^{2}_{H^{\frac{s \nu}{2}}(\G)}\nonumber\\
& \leq & \left(1+\|p\|_{L^{\frac{Q}{\nu s}}(\G)}\right)\|u_0\|_{H^{\frac{s \nu}{2}}(\G)}^{2}\,,
\end{eqnarray}
 uniformly in $t \in [0,T]$.
Additionally, \eqref{prop2.et}, using the estimate \eqref{prop2.est.m}, implies
\begin{equation}
\label{prop2.other.est}
\|\R^{\frac{s}{2}}u(t,\cdot)\|\L \lesssim \left(1+\|p\|_{L^{\frac{Q}{\nu s}}(\G)}\right)\|u_0\|_{H^{\frac{s \nu}{2}}(\G)}^{2}\,.
\end{equation}
To show our claim \eqref{prop2.claim}, it suffices to show the desired estimate for the solution norm $\|u(t,\cdot)\|_{L^2(\G)}$. To this end, observe that by the Sobolev embeddings \eqref{inclusions} and  H\"older's inequality, using \eqref{emb} with $p$ instead of $\sqrt{p}$, 
and $\|p^2\|_{L^{\frac{Q}{\nu s}}(\G)}=\|p\|^2_{L^{\frac{2Q}{\nu s}}(\G)}$,
one obtains 
\[
\|p u(t,\cdot)\|\L \lesssim \|p\|^{2}_{L^{\frac{2Q}{\nu s}}}\|\R^{\frac{s}{2}}u(t,\cdot)\|\L\,,
\]
where the last combined with \eqref{prop2.other.est} yields
\begin{equation}\label{prop2.est.hold2}
\|pu(t,\cdot)\|\L \lesssim \|p\|^{2}_{L^{\frac{2Q}{\nu s}}(\G)} \left(1+\|p\|_{L^{\frac{Q}{\nu s}}(\G)}\right)\|u_0\|_{H^{\frac{s \nu}{2}}(\G)}^{2}\,.
\end{equation}
Finally, using arguments similar to those we developed in Proposition \ref{prop1.clas.schr}, together with the estimate \eqref{prop2.est.hold2} we get
\begin{eqnarray*}
\label{prop2.est.f}
\|u(t,\cdot)\|\L & \leq & \|u_0\|\L+\|p(\cdot)u(t,\cdot)\|\L\nonumber\\
& \lesssim & \|u_0\|_{H^{\frac{s \nu}{2}}(\G)}^{2}  \left\{\left(1+\|p\|^{2}_{L^{\frac{2Q}{\nu s}}(\G)}\right) \left(1+\|p\|_{L^{\frac{Q}{\nu s}}(\G)}\right)\right\}\,,
\end{eqnarray*}
 uniformly in $t \in [0,T]$. The uniqueness of $u$ is immediate by the estimate \eqref{prop2.claim}, and this finishes the proof of Proposition \ref{prop2}.
\end{proof}

\section{Existence and uniqueness of the very weak solution}
Proving the existence and the uniqueness of the very weak solution to the Cauchy problem \eqref{sch.eq} requires to assume that the potential $p$ and the initial data $u_0$ in \eqref{sch.eq} satisfy some moderateness properties. Regarding the potential $p$, we have in mind cases where $p$ is strongly singular; like for instance when $p=\delta$ or $p=\delta^2$. In the first case the moderate properties of $p$ follow by Proposition \ref{prop.mol}, while, in the second case, we understand $\delta^2$ as an approximating family or in the Colombeau sense.

\begin{definition}[Moderateness]
	\begin{enumerate}
		\item Let $X$ be a normed space of functions on $\G$.  A net of functions $(f_\epsilon)_\epsilon \in X$ is said to be \textit{$X$-moderate} if there exists $N \in \mathbb{N}$ such that \[
		\|f_\epsilon\|_{X}\lesssim \epsilon^{-N}\,,
		\]
		uniformly in $\epsilon \in (0,1]$.
		\item A net of functions $(u_\epsilon)_\epsilon$ in $C([0,T]; H^{\frac{s \nu}{2}}(\G))$ is said to be \textit{$C([0,T]; H^{\frac{s \nu}{2}}(\G))$-moderate} if there exists $N \in \mathbb{N}$ such that 
		\[
		\sup_{t \in [0,T]}\|u(t,\cdot)\|_{H^{\frac{s \nu}{2}}(\G)}\lesssim \epsilon^{-N}\,,
		\]
		uniformly in $\epsilon \in (0,1]$.
	\end{enumerate}
\end{definition}

\begin{definition}[Negligibility]
\label{defn.negl}
{Let $Y$ be a normed space of functions on $\G$.
Let $(f_\epsilon)_\epsilon $, $(\tilde{f}_\epsilon)_\epsilon$ be two nets. Then, the net $(f_\epsilon-\tilde{f}_\epsilon)_\epsilon$ is called $Y$-{\em negligible}, if the following condition is satisfied
    \begin{equation}\label{def.cond.negl}
    \|f_\epsilon-\tilde{f}_\epsilon\|_{Y}\lesssim \epsilon^k\,,
    \end{equation}
    for all $k \in \mathbb{N}$, $\epsilon \in (0,1]$. In the case where $f=f(t,x)$ is a function also depending on $t \in [0,T]$, then the {\em negligibility condition} \eqref{def.cond.negl} can be regarded as
    \[
    \|f_\epsilon(t,\cdot)-\tilde{f}_\epsilon(t,\cdot)\|_{Y}\lesssim \epsilon^k\,,\quad \forall k \in \mathbb{N}\,,
    \]
    uniformly in $t \in [0,T]$.} The constant in the inequality \eqref{def.cond.negl} can depend on $k$ but not on $\epsilon$.
\end{definition}

Definitions \ref{defn.vws.schr} and \ref{defn.uniq.schr} introduce the notion of the unique very weak solution to the Cauchy problem \eqref{sch.eq}. Our definitions resembles the ones in \cite{GR15}, but here we measure moderateness and negligibility in terms of $L^p(\G)$ or $H^{\frac{s \nu}{2}}(\G)$-norms rather than in terms of Gevrey-seminorms.
\begin{definition}[Very weak solution]\label{defn.vws.schr}
If there exists a $L^{\infty}(\G)$-moderate, or (provided that $Q> \nu s$) a $L^{\frac{2Q}{\nu s}}(\G)\cap L^{\frac{Q}{\nu s}}(\G)$-moderate {\em approximating net} $(p_\epsilon)_\epsilon$, $p_\epsilon\geq 0$ to $p$, and a $H^{\frac{s \nu}{2}}(\G)$-moderate regularising net $(u_{0,\epsilon})_\epsilon$ to $u_0$, then the net $(u_\epsilon)_\epsilon \in C([0,T]; H^{\frac{s \nu}{2}}(\G))$ which solves the $\epsilon$-parametrised problem 
	\begin{equation}\label{kg.reg}
		\begin{cases}
			i\partial_{t}u_\epsilon(t,x) +\mathcal{R}^{s}u_\epsilon (t,x)+p_\epsilon (x)u_\epsilon(t,x)=0\,,\quad (t,x)\in [0,T]\times \mathbb{G},\\
			u_\epsilon(0,x)=u_{0,\epsilon}(x)\,,\quad x \in \G\,,	
		\end{cases}       
	\end{equation}
for all $\epsilon \in (0,1]$, is said to be a \textit{very weak solution to the Cauchy problem \eqref{sch.eq}} if it is $H^{\frac{s \nu}{2}}(\G)$-moderate.
\end{definition}

\begin{remark}\label{rem.d^2}Let us mention that in Definition \ref{defn.vws.schr} the approximating net $p_\epsilon$ includes the case where $p_\epsilon$ is a \textit{regularisation} of $p$ in the case where $p \in \mathcal{D}^{'}(\G)$ is a distribution, i.e., for a Friedrichs mollifier $\psi\geq 0$ we define $p_\epsilon=p*\psi_\epsilon$. In singular cases, like for instance when $p=\delta^2$, we can think of $p_\epsilon$ as $p_\epsilon=\psi_{\epsilon}^{2}$; see also Remark \ref{rem.negl} for additional clarifications.
\end{remark}


Next we formulate the very weak existence result in compatibility with the two possible moderateness assumptions on the approximating nets $(p_\epsilon)_\epsilon$ as stated in Definition \ref{defn.vws.schr}. 
Before doing that, let us mention that, regarding the moderateness assumption of the regularisations (or appro\-ximations), the global structure of $\mathcal{E}^{'}$-distributions, implies that, for any regularisation of them taken via convolutions with a mollifier as in \eqref{mol}, the assumption on the $L^{p}$-moderateness, for $p \in [1,\infty]$, is natural. Formally we have the following proposition as in Proposition 4.8 in \cite{CRT21}.
 
{ \begin{proposition}\label{prop.mol}
Let $v \in \mathcal{E}^{'}(\G)$, and let 
$v_\epsilon=v*\psi_\epsilon$ be obtained as
the convolution of $v$ with a mollifier $\psi_\epsilon$ as in \eqref{mol}.
Then the regularising net $(v_\epsilon)_\epsilon$ is $L^{p}(\G)$-moderate for any $p \in [1,\infty]$.
\end{proposition}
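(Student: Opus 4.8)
The plan is to reduce the statement about a compactly supported distribution $v\in\mathcal{E}'(\G)$ to the known local structure theorem for distributions of compact support, and then to track how convolution with the homogeneous mollifier $\psi_\epsilon$ in \eqref{mol} affects each $L^p$-norm. First I would recall that any $v\in\mathcal{E}'(\G)$ has finite order: there exist a compact set $K\subset\G$, a constant $C>0$, and an integer $M$ such that $|\langle v,\varphi\rangle|\le C\sum_{[\alpha]\le M}\sup_K|X^\alpha\varphi|$ for all $\varphi\in C^\infty(\G)$, where the $X^\alpha$ are the left-invariant derivatives from the Poincar\'e--Birkhoff--Witt basis. Equivalently, by the structure theorem, $v$ can be written as a finite sum $v=\sum_{[\alpha]\le M} X^\alpha f_\alpha$ with each $f_\alpha$ a continuous (hence $L^\infty_{loc}$, hence bounded with compact support, so in every $L^p$) function supported in a fixed compact neighbourhood of $K$. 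I would state this as the starting point, citing the standard distribution theory (and noting that on a Lie group the left-invariant vector fields play the role of $\partial_{x_j}$).

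Next I would compute $v_\epsilon=v*\psi_\epsilon$. Since convolution intertwines with left-invariant derivatives, $v_\epsilon=\sum_{[\alpha]\le M} (X^\alpha f_\alpha)*\psi_\epsilon=\sum_{[\alpha]\le M} f_\alpha*(X^\alpha\psi_\epsilon)$, moving all derivatives onto the smooth mollifier. Now I use the scaling \eqref{mol}: $\psi_\epsilon(x)=\epsilon^{-Q}\psi(D_{\epsilon^{-1}}x)$, and since $X_j$ is homogeneous of degree $\nu_j$ under the dilations, $X^\alpha\psi_\epsilon(x)=\epsilon^{-Q-[\alpha]}(X^\alpha\psi)(D_{\epsilon^{-1}}x)$. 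Hence $\|X^\alpha\psi_\epsilon\|_{L^1(\G)}=\epsilon^{-[\alpha]}\|X^\alpha\psi\|_{L^1(\G)}$, a bound of the form $C\epsilon^{-M}$ uniformly in $\epsilon\in(0,1]$. Then Young's convolution inequality on the unimodular group $\G$ gives $\|v_\epsilon\|_{L^p(\G)}\le\sum_{[\alpha]\le M}\|f_\alpha\|_{L^p(\G)}\,\|X^\alpha\psi_\epsilon\|_{L^1(\G)}\lesssim\epsilon^{-M}$, uniformly in $\epsilon\in(0,1]$, which is exactly the $L^p(\G)$-moderateness with $N=M$. I would remark that the same $N$ works simultaneously for all $p\in[1,\infty]$ because $f_\alpha\in L^p(\G)$ for every $p$ (being bounded with compact support) and the $L^1$-norm of the mollifier-derivative is $p$-independent.

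The only genuinely delicate point — and the one I would spell out rather than wave at — is the passage from "$v$ has compact support" to the representation $v=\sum X^\alpha f_\alpha$ with continuous $f_\alpha$ of compact support on the group $\G$. On $\mathbb{R}^n$ this is the classical structure theorem; on a graded (hence nilpotent, hence diffeomorphic to $\mathbb{R}^n$ via $\exp_\G$) Lie group one can either transfer the Euclidean statement through the exponential coordinates and then re-express Euclidean derivatives in terms of the left-invariant $X^\alpha$ (the change of basis is triangular with polynomial coefficients, so it stays within finitely many derivatives of possibly larger homogeneous length, which is harmless), or invoke the finite-order estimate directly and integrate against the left-invariant analogue of the Newtonian kernel. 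I expect this bookkeeping — making sure the supports stay compact and the orders stay finite after the coordinate change — to be the main obstacle; everything after it is Young's inequality plus the homogeneity scaling, which is routine.
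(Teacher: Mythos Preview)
The paper does not actually prove Proposition~\ref{prop.mol}; it merely quotes it from \cite[Proposition~4.8]{CRT21}. Your argument via the structure theorem for $\mathcal{E}'(\G)$, homogeneity of the mollifier, and Young's inequality is the standard route and is essentially correct.

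There is one technical slip worth flagging. On a non-abelian group the identity you invoke, $(X^\alpha f_\alpha)*\psi_\epsilon=f_\alpha*(X^\alpha\psi_\epsilon)$, is not literally true: for a left-invariant vector field $X_j$ one has $X_j(f*g)=f*(X_jg)$, but moving a left-invariant derivative off the \emph{first} factor produces the corresponding \emph{right}-invariant field on the second, namely $(X_jf)*g=f*(\tilde{X}_jg)$ (integrate by parts and compute $X_{j,y}[\psi_\epsilon(y^{-1}x)]=-(\tilde{X}_j\psi_\epsilon)(y^{-1}x)$). This does not damage your estimate at all, because the right-invariant fields $\tilde{X}_j$ are also homogeneous of degree $\nu_j$ under the dilations $D_r$ (since $D_r$ is a group automorphism), so $\|\tilde{X}^{\alpha}\psi_\epsilon\|_{L^1(\G)}=\epsilon^{-[\alpha]}\|\tilde{X}^{\alpha}\psi\|_{L^1(\G)}$ and the rest of your Young-inequality argument goes through verbatim with $N=M$. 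Alternatively, you can bypass the structure theorem entirely: write $v_\epsilon(x)=\langle v_y,\psi_\epsilon(y^{-1}x)\rangle$, apply the finite-order seminorm bound directly, observe that each $y$-derivative of $\psi_\epsilon(y^{-1}x)$ costs at most $\epsilon^{-\nu_n}$, and use that $\mathrm{supp}\,v_\epsilon\subset K\cdot\mathrm{supp}\,\psi$ is a fixed compact set for $\epsilon\in(0,1]$.
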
}
As an immediate consequence of Proposition \ref{prop.mol} is that, for the existence of the very weak solution to the Cauchy problem \eqref{sch.eq}, we do not require that the initial data $u_0$ is necessarily an element of the space $H^{\frac{s \nu}{2}}(\G)$ as Proposition \ref{prop1.clas.schr} and Proposition \ref{prop2} on the existence of the classical solution of \eqref{sch.eq} indicate. Indeed, we also allow that $u_0 \in \mathcal{E}^{'}(\G)$ is compactly supported distribution.
\begin{theorem}
	\label{thm.ex.kg}
	Let $u_0 \in H^{\frac{s \nu}{2}}(\G) \cup \mathcal{E}^{'}(\G)$. 
Then the Cauchy problem \eqref{sch.eq} has a very weak solution.
\end{theorem}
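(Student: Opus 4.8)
The plan is to reduce the existence of a very weak solution to the existence and uniform moderateness of the classical solutions $u_\epsilon$ of the regularised family \eqref{kg.reg}, for which Proposition \ref{prop1.clas.schr} (or Proposition \ref{prop2} in the case $Q>\nu s$) already does the analytic heavy lifting. First I would record the two cases for the data. If $u_0 \in H^{\frac{s\nu}{2}}(\G)$, simply take $u_{0,\epsilon}=u_0*\psi_\epsilon$; this net converges to $u_0$ in $H^{\frac{s\nu}{2}}(\G)$ and in particular is $H^{\frac{s\nu}{2}}(\G)$-bounded, hence trivially $H^{\frac{s\nu}{2}}(\G)$-moderate (one may take $N=0$). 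If instead $u_0\in\mathcal E'(\G)$, set again $u_{0,\epsilon}=u_0*\psi_\epsilon$; then by Proposition \ref{prop.mol} (applied, say, with $p$ replaced by $u_0$ and noting $\R^{\frac{s}{2}}(u_0*\psi_\epsilon)=u_0*\R^{\frac{s}{2}}\psi_\epsilon$, or simply because $u_0*\psi_\epsilon\in\mathcal D(\G)$ with seminorms growing polynomially in $\epsilon^{-1}$) the net $(u_{0,\epsilon})_\epsilon$ is $H^{\frac{s\nu}{2}}(\G)$-moderate. For the potential, Definition \ref{defn.vws.schr} grants us an $L^\infty(\G)$-moderate (resp.\ $L^{\frac{2Q}{\nu s}}(\G)\cap L^{\frac{Q}{\nu s}}(\G)$-moderate) approximating net $(p_\epsilon)_\epsilon$ with $p_\epsilon\ge0$; in the distributional case $p\in\mathcal D'(\G)\cap\mathcal E'(\G)$ one takes $p_\epsilon=p*\psi_\epsilon$ with $\psi\ge0$ and invokes Proposition \ref{prop.mol} to get the required $L^p$-moderateness, while in genuinely singular cases such as $p=\delta^2$ one uses $p_\epsilon=\psi_\epsilon^2$ as in Remark \ref{rem.d^2}.

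Next, for each fixed $\epsilon\in(0,1]$ the coefficient $p_\epsilon$ is a bona fide non-negative function lying in $L^\infty(\G)$ (resp.\ in $L^{\frac{2Q}{\nu s}}(\G)\cap L^{\frac{Q}{\nu s}}(\G)$), and $u_{0,\epsilon}\in H^{\frac{s\nu}{2}}(\G)$. Hence Proposition \ref{prop1.clas.schr} (resp.\ Proposition \ref{prop2}) applies verbatim to the $\epsilon$-problem \eqref{kg.reg} and produces a unique solution $u_\epsilon\in C([0,T];H^{\frac{s\nu}{2}}(\G))$ together with the quantitative bound
\begin{equation}\label{eq:vwe-bound}
\sup_{t\in[0,T]}\|u_\epsilon(t,\cdot)\|_{H^{\frac{s\nu}{2}}(\G)}\lesssim \bigl(1+\|p_\epsilon\|_{L^\infty(\G)}\bigr)\|u_{0,\epsilon}\|_{H^{\frac{s\nu}{2}}(\G)}
\end{equation}
in the first case, and the analogous estimate \eqref{prop2.claim} with $\|p_\epsilon\|_{L^{\frac{2Q}{\nu s}}(\G)}$ and $\|p_\epsilon\|_{L^{\frac{Q}{\nu s}}(\G)}$ in the second. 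This defines the net $(u_\epsilon)_\epsilon\in C([0,T];H^{\frac{s\nu}{2}}(\G))$ which is the candidate very weak solution.

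Finally I would verify $H^{\frac{s\nu}{2}}(\G)$-moderateness of $(u_\epsilon)_\epsilon$ by propagating the moderateness of the data through \eqref{eq:vwe-bound}. By the moderateness of $(p_\epsilon)_\epsilon$ there is $N_1$ with $\|p_\epsilon\|_{L^\infty(\G)}\lesssim\epsilon^{-N_1}$ (resp.\ $N_1$ controlling both Lebesgue norms of $p_\epsilon$), and by the moderateness of $(u_{0,\epsilon})_\epsilon$ there is $N_2$ with $\|u_{0,\epsilon}\|_{H^{\frac{s\nu}{2}}(\G)}\lesssim\epsilon^{-N_2}$; plugging these into \eqref{eq:vwe-bound} (or into \eqref{prop2.claim}) gives
\begin{equation}\label{eq:vwe-moderate}
\sup_{t\in[0,T]}\|u_\epsilon(t,\cdot)\|_{H^{\frac{s\nu}{2}}(\G)}\lesssim \epsilon^{-N}\,,\qquad N:=N_1+N_2+C\,,
\end{equation}
with $C$ absorbing the fixed exponents ($1$, resp.\ $\tfrac32$) appearing in the products on the right-hand side of the classical estimates, uniformly in $\epsilon\in(0,1]$. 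Thus $(u_\epsilon)_\epsilon$ is $C([0,T];H^{\frac{s\nu}{2}}(\G))$-moderate, which by Definition \ref{defn.vws.schr} is exactly the statement that it is a very weak solution of \eqref{sch.eq}, completing the proof.

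I expect no serious obstacle here: the only point requiring a little care is making sure the moderateness exponents genuinely transfer, i.e.\ that the classical estimates of Propositions \ref{prop1.clas.schr} and \ref{prop2} are polynomial (not worse) in the norms of $p_\epsilon$ and $u_{0,\epsilon}$, which they manifestly are, and that in the $u_0\in\mathcal E'(\G)$ branch one correctly cites Proposition \ref{prop.mol} to obtain $H^{\frac{s\nu}{2}}$-moderateness of $(u_0*\psi_\epsilon)_\epsilon$ rather than mere $L^p$-moderateness — this follows since $\R^{\frac{s}{2}}$ and the $L^2$-norm applied to $u_0*\psi_\epsilon$ are again controlled by $L^2$-moderate quantities (e.g.\ $\|u_0*\psi_\epsilon\|_{\dot L^2_{s\nu/2}}=\|u_0*(\R^{s/2}\psi)_\epsilon\|_{L^2}$ up to harmless dilation factors, and $\R^{s/2}\psi$ is still a fixed Schwartz-class function against which Proposition \ref{prop.mol} applies).
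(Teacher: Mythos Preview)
Your proposal is correct and follows essentially the same approach as the paper: both arguments apply the classical estimates \eqref{prop1.claim} (resp.\ \eqref{prop2.claim}) to the regularised problem \eqref{kg.reg} and then propagate the moderateness of $(p_\epsilon)_\epsilon$ and $(u_{0,\epsilon})_\epsilon$ through those polynomial bounds to conclude $H^{\frac{s\nu}{2}}(\G)$-moderateness of $(u_\epsilon)_\epsilon$. The paper's proof is considerably terser and does not spell out the $H^{\frac{s\nu}{2}}$-moderateness of $(u_0*\psi_\epsilon)_\epsilon$ in the $u_0\in\mathcal E'(\G)$ case, whereas you supply the (correct) extra reasoning via $\R^{s/2}(u_0*\psi_\epsilon)=u_0*\R^{s/2}\psi_\epsilon$ and homogeneity; this added care is welcome but does not constitute a different route.
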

\begin{proof}
Let $u_0$ be as in the hypothesis. If $(p_\epsilon)_\epsilon$ is $L^{\infty}(\G)$-moderate (or $L^{\frac{2Q}{\nu s}}(\G)\cap L^{\frac{Q}{\nu s}}(\G)$-moderate) and $(u_{0,\epsilon})_\epsilon$ is $H^{\frac{s \nu}{2}}(\G)$-moderate, then, since also $p_\epsilon \geq 0$, by using \eqref{prop1.claim} (or \eqref{prop2.claim}, respectively) we get 
	\[
\|u_\epsilon(t,\cdot)\|_{H^{\frac{s \nu}{2}}(\G)} \lesssim \epsilon^{-N}\,,\quad N \in \mathbb{N}\,,
	\]
	for all $t \in [0,T]$ and for any $\epsilon \in (0,1]$. This means that the family of solutions $(u_\epsilon)_\epsilon$ is $H^{\frac{s \nu}{2}}(\G)$-moderate, and completes the proof of Theorem \ref{thm.ex.kg}.
\end{proof}

Roughly speaking, proving well-posedness in the very weak sense amount to proving that a very weak solution exists and it is unique modulo negligible nets. For the Cauchy problem \eqref{sch.eq} that we consider here, this notion can be formalised as follows.
\begin{definition}\label{defn.uniq.schr}
{Let $X$ and $Y$ be normed spaces of functions on $\G$.
We say that the Cauchy problem \eqref{sch.eq} has an $(X,Y)$-unique very weak solution, if for all $X$-moderate nets
	$p_\epsilon\geq 0, \tilde{p}_\epsilon\geq 0,$ such that $(p_\epsilon-\tilde{p}_\epsilon)_\epsilon$ is $Y$-negligible}, and for all $H^{\frac{s \nu}{2}}(\G)$-moderate regularisations $u_{0,\epsilon}$, $\tilde{u}_{0,\epsilon}$ such that $(u_{0,\epsilon}-\tilde{u}_{0,\epsilon})_\epsilon$ is $H^{\frac{s \nu}{2}}(\G)$-negligible, it follows that
	\[
	\|u_\epsilon(t,\cdot)-\tilde{u}_\epsilon(t,\cdot)\|_{L^2(\G)} \leq C_N \epsilon^N\,,\quad \forall N \in \mathbb{N}\,,
	\]
	 uniformly in $t \in [0,T]$, and for all $\epsilon \in (0,1]$, where $(u_\epsilon)_\epsilon$ and $(\tilde{u}_\epsilon)_\epsilon$ are the families of solutions corresponding to the $\epsilon$-parametrised problems 
	 \begin{equation}\label{kg.reg.tild}
		\begin{cases}
			i\partial_{t}u_\epsilon(t,x) +\mathcal{R}^{s}u_\epsilon (t,x)+p_\epsilon(x)u_\epsilon(t,x)=0\,,\quad (t,x)\in [0,T]\times \mathbb{G},\\
			u_\epsilon(0,x)=u_{0,\epsilon}(x),\; x \in \G\,,	
		\end{cases}       
	\end{equation}
	and 
\begin{equation}\label{kg.ret.notil}
		\begin{cases}
			i\partial_{t}\tilde{u}_\epsilon(t,x) +\mathcal{R}^{s}\tilde{u}_\epsilon (t,x)+\tilde{p}_\epsilon(x)\tilde{u}_\epsilon(t,x)=0\,,\quad (t,x)\in [0,T]\times \mathbb{G},\\
			\tilde{u}_\epsilon(0,x)=\tilde{u}_{0,\epsilon}(x),\; x \in \G\,,	
		\end{cases}       
	\end{equation}
	respectively.	
\end{definition}

\begin{remark}\label{rem.negl}
 Definition \ref{defn.uniq.schr} is a rigorous version of Definition 2.2 in the previous paper \cite{ARST21b} regarding the uniqueness of the very weak solution to the Cauchy problem \eqref{sch.eq} in the Euclidean setting. In particular,
 in \cite{ARTS21b} we assume that $p_\epsilon$ and $\tilde{p}_\epsilon$ are regularisations of $p \in \mathcal{D}^{'}(\G)$, and so they approximate $p$ is some suitable sense. Instead, in Definition \ref{defn.uniq.schr} we do not require the nets $p_\epsilon$, $\tilde{p}_\epsilon$ to approximate $p$; for instance, if $p_\epsilon$ is some regularisation of $p$ and $\tilde{p}_\epsilon$ is given as
    \begin{equation}\label{mex}
\tilde{p}_\epsilon=p_\epsilon+e^{-1/\epsilon}\,,
	\end{equation}
	then the net 
$(p_\epsilon-\tilde{p}_\epsilon)_\epsilon$ is $L^\infty$-negligible, and so satisfies the assumption described in Definition \ref{defn.uniq.schr}.
 Moreover, the absence of the approximation requirement, allows to consider singular cases of $p$; cf. Remark \ref{rem.d^2} where we take $p=\delta^2$ and $p_\epsilon=\psi_{\epsilon}^{2}$. Thus, under the choice of $\tilde{p}_\epsilon$ as in \eqref{mex}, the implied net $(p_\epsilon-\tilde{p}_\epsilon)$ is suitable for our purposes. 

To summarise the above, the meaning of the conditions regarding the nets $p_\epsilon$ and $\tilde{p}_\epsilon$ in Definition \ref{defn.uniq.schr} should be interpreted as a requirement for the stability of the very weak solution under negligible changes on the potential $p$; see also Theorem \ref{thm.uniq.kg.1} and Theorem \ref{thm.uniq.kg.2} where no approximating assumption has been regarded.
\end{remark}

The following theorems show the uniqueness of the very weak solution to the Cauchy problem \eqref{sch.eq} under different assumptions on the nets $(p_\epsilon)_\epsilon$. In order to do this, we need the following technical lemma, that shall also be used to prove the consistency of the very weak solution with the classical one.
\begin{lemma}
\label{techn.lem}
Let $u_0 \in L^2(\G)$ and assume that $p$ is non-negative. Then, for the unique solution $u$ to the Cauchy problem \eqref{sch.eq} we have the energy conservation
\begin{equation}\label{en.cons}
\|u(t,\cdot)\|_{L^2(\G)}=\textnormal{constant}\,,
\end{equation}
for all $t \in [0,T]$.
\end{lemma}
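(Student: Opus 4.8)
The plan is to obtain the conservation of the $L^2$-norm from the basic energy identity: I would test equation \eqref{sch.eq} against the solution $u$ itself in $L^2(\G)$ and extract the imaginary part, in the same spirit as the first steps of the proof of Proposition \ref{prop1.clas.schr} but pairing with $u$ rather than with $u_t$.

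Concretely, assume first that $u_0\in H^{\frac{s\nu}{2}}(\G)$, so that by Propositions \ref{prop1.clas.schr} and \ref{prop2} the solution $u$ lies in $C([0,T];H^{\frac{s\nu}{2}}(\G))$ and the quantities $\|\R^{s/2}u(t,\cdot)\|_{L^2(\G)}$ and $\|\sqrt{p}\,u(t,\cdot)\|_{L^2(\G)}$ are finite (they are the ingredients of the energy $E(t)$ used there). Taking the $L^2(\G)$-inner product of \eqref{sch.eq} with $u(t,\cdot)$ gives
\[
i\langle u_t(t,\cdot),u(t,\cdot)\rangle_{L^2(\G)}+\langle \R^{s}u(t,\cdot),u(t,\cdot)\rangle_{L^2(\G)}+\langle p(\cdot)u(t,\cdot),u(t,\cdot)\rangle_{L^2(\G)}=0\,.
\]
Since $\R$ is a positive Rockland operator, $\R^{s}$ is self-adjoint and non-negative, so $\langle \R^{s}u,u\rangle_{L^2(\G)}=\|\R^{s/2}u\|_{L^2(\G)}^{2}\in\mathbb{R}$, and $\langle p u,u\rangle_{L^2(\G)}=\|\sqrt{p}\,u\|_{L^2(\G)}^{2}\in\mathbb{R}$ because $p\geq 0$. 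Hence $i\langle u_t,u\rangle_{L^2(\G)}$ is real, which forces $\Re\langle u_t(t,\cdot),u(t,\cdot)\rangle_{L^2(\G)}=0$, and therefore
\[
\frac{d}{dt}\|u(t,\cdot)\|_{L^2(\G)}^{2}=2\,\Re\langle u_t(t,\cdot),u(t,\cdot)\rangle_{L^2(\G)}=0\,,
\]
so that $\|u(t,\cdot)\|_{L^2(\G)}=\|u_0\|_{L^2(\G)}$ for all $t\in[0,T]$.

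These manipulations are completely classical when $u_0$ is a Schwartz function and the potential is smooth and compactly supported, as in the regularised problems \eqref{kg.reg}; at the regularity level of Propositions \ref{prop1.clas.schr} and \ref{prop2} they are justified by reading $u_t=i(\R^{s}u+pu)$ as an identity in $H^{-\frac{s\nu}{2}}(\G)$ and interpreting $\langle u_t,u\rangle_{L^2(\G)}$ as the duality pairing with $u\in H^{\frac{s\nu}{2}}(\G)$. For a general datum $u_0\in L^2(\G)$ I would then approximate it in $L^2(\G)$ by a sequence $u_{0,k}\in H^{\frac{s\nu}{2}}(\G)$: by linearity the difference $u_k-u_j$ of the corresponding solutions solves \eqref{sch.eq} with datum $u_{0,k}-u_{0,j}$, so the case already settled yields $\|u_k(t,\cdot)-u_j(t,\cdot)\|_{L^2(\G)}=\|u_{0,k}-u_{0,j}\|_{L^2(\G)}$ uniformly in $t$; hence $(u_k)_k$ is Cauchy in $C([0,T];L^2(\G))$, its limit solves \eqref{sch.eq} with datum $u_0$ and coincides with $u$ by uniqueness, and $\|u(t,\cdot)\|_{L^2(\G)}=\lim_k\|u_{0,k}\|_{L^2(\G)}=\|u_0\|_{L^2(\G)}$. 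The main (and essentially the only) obstacle is precisely this regularity bookkeeping, namely making sense of the two pairings and of the differentiation under the norm; the algebraic heart of the matter is the one-line observation that the $\R^{s}$- and $p$-pairings with $u$ are real, from which the vanishing of $\frac{d}{dt}\|u(t,\cdot)\|_{L^2(\G)}^{2}$ is immediate.
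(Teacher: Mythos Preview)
Your argument is correct and is essentially the paper's own proof: the paper multiplies \eqref{sch.eq} by $-i$, pairs with $u$, and takes the real part, which amounts exactly to your observation that $\langle \R^{s}u,u\rangle$ and $\langle pu,u\rangle$ are real so that $\Re\langle u_t,u\rangle=0$. The paper's proof is in fact briefer and does not spell out the regularity/approximation discussion you add for general $u_0\in L^2(\G)$.
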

\begin{proof}
If we multiply equation \eqref{sch.eq} by $-i$, then we obtain 
\[
u_{t}(t,x)-i \R^s u(t,x)-ip(x)u(t,x)=0\,.
\]
If we multiply the above with $u$, integrate over $\G$, and consider the real part of the above we get 
\[
\Re(\langle u_t(t,\cdot),u(t,\cdot)\rangle_{L^2(\G)}-i\langle \R^su(t,\cdot),u(t,\cdot) \rangle_{L^2(\G)}-i\langle p(\cdot)u(t,\cdot),u(t,\cdot) \rangle_{L^2(\G)})=0\,,
\]
or equivalently
\[
\Re (\langle u_t(t,\cdot).u(t,\cdot)\rangle_{L^2(\G)})=\frac{1}{2}\partial_t\|u(t,\cdot)\|\L=0\,.
\]
The latter means that we have energy conservation, i.e., the norm $\|u(t,\cdot)\|_{L^2(\G)}$ remains constants over time, and in particular we have
\[
\|u(t,\cdot)\|_{L^2(\G)}=\|u_0\|_{L^2(\G)}\,,\quad \forall t \in [0,T]\,,
\]
implying \eqref{techn.lem}.
\end{proof}

\begin{theorem}\label{thm.uniq.kg.1}
	 Suppose that $u_0 \in H^{\frac{s \nu}{2}}(\G)\cup \mathcal{E}^{'}(\G)$.
	 {Then the very weak solution to the Cauchy problem \eqref{sch.eq} is
$(L^\infty(\G), L^\infty(\G))$-unique.}
\end{theorem}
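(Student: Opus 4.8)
The plan is to show that the difference of two very weak solutions coming from $L^\infty$-moderate data with $L^\infty$-negligible differences is itself negligible in $C([0,T];L^2(\G))$. First I would set $v_\epsilon := u_\epsilon - \tilde u_\epsilon$, where $(u_\epsilon)_\epsilon$ solves \eqref{kg.reg.tild} with data $(p_\epsilon, u_{0,\epsilon})$ and $(\tilde u_\epsilon)_\epsilon$ solves \eqref{kg.ret.notil} with data $(\tilde p_\epsilon, \tilde u_{0,\epsilon})$. Subtracting the two equations, $v_\epsilon$ satisfies
\begin{equation*}
i\partial_t v_\epsilon(t,x) + \mathcal{R}^s v_\epsilon(t,x) + p_\epsilon(x) v_\epsilon(t,x) = f_\epsilon(t,x)\,, \qquad v_\epsilon(0,x) = u_{0,\epsilon}(x) - \tilde u_{0,\epsilon}(x)\,,
\end{equation*}
where the forcing term is $f_\epsilon(t,x) := \bigl(\tilde p_\epsilon(x) - p_\epsilon(x)\bigr)\tilde u_\epsilon(t,x)$, i.e. the ``error'' generated both by the mismatch in initial data and by the mismatch in the potentials.

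The key step is an $L^2$-energy estimate for this inhomogeneous Schrödinger equation with the non-negative potential $p_\epsilon$. Multiplying by $-i\overline{v_\epsilon}$, integrating over $\G$, and taking real parts, the skew-adjoint terms $\langle \mathcal{R}^s v_\epsilon, v_\epsilon\rangle$ and $\langle p_\epsilon v_\epsilon, v_\epsilon\rangle$ drop out (exactly as in Lemma \ref{techn.lem}), leaving $\tfrac12 \partial_t \|v_\epsilon(t,\cdot)\|_{L^2(\G)}^2 = \Re\langle -i f_\epsilon(t,\cdot), v_\epsilon(t,\cdot)\rangle_{L^2(\G)} \le \|f_\epsilon(t,\cdot)\|_{L^2(\G)}\,\|v_\epsilon(t,\cdot)\|_{L^2(\G)}$. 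Dividing and integrating in time (a standard Grönwall-type argument) gives
\begin{equation*}
\|v_\epsilon(t,\cdot)\|_{L^2(\G)} \le \|u_{0,\epsilon} - \tilde u_{0,\epsilon}\|_{L^2(\G)} + \int_0^T \|f_\epsilon(\tau,\cdot)\|_{L^2(\G)}\,d\tau\,,
\end{equation*}
uniformly in $t \in [0,T]$. Now I estimate $\|f_\epsilon(\tau,\cdot)\|_{L^2(\G)} \le \|p_\epsilon - \tilde p_\epsilon\|_{L^\infty(\G)}\,\|\tilde u_\epsilon(\tau,\cdot)\|_{L^2(\G)}$. By Proposition \ref{prop1.clas.schr} (applied to the $\tilde{\ }$ system, whose potential $\tilde p_\epsilon$ is $L^\infty$-moderate) the factor $\|\tilde u_\epsilon(\tau,\cdot)\|_{L^2(\G)}$ is $C([0,T];L^2(\G))$-moderate, hence $\lesssim \epsilon^{-M}$ for some fixed $M \in \mathbb{N}$; and by hypothesis $(p_\epsilon - \tilde p_\epsilon)_\epsilon$ is $L^\infty(\G)$-negligible, so $\|p_\epsilon - \tilde p_\epsilon\|_{L^\infty(\G)} \lesssim \epsilon^k$ for every $k$. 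Combined with the $H^{s\nu/2}(\G)$-negligibility of $(u_{0,\epsilon} - \tilde u_{0,\epsilon})_\epsilon$ (which in particular gives $L^2$-negligibility of the data difference), we conclude $\|v_\epsilon(t,\cdot)\|_{L^2(\G)} \lesssim \epsilon^{k} + \epsilon^{k - M} \cdot T$, and since $k$ is arbitrary this is $\lesssim_N \epsilon^N$ for every $N$, uniformly in $t$. That is precisely the $(L^\infty(\G), L^\infty(\G))$-uniqueness required by Definition \ref{defn.uniq.schr}.

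I expect the main obstacle to be purely bookkeeping rather than conceptual: one must be careful that the moderateness exponent $M$ for $\|\tilde u_\epsilon\|_{L^2}$ is a single fixed natural number (it is, by Proposition \ref{prop1.clas.schr}, since the $L^\infty$-moderateness of $\tilde p_\epsilon$ and $H^{s\nu/2}$-moderateness of $\tilde u_{0,\epsilon}$ each contribute a fixed exponent), so that beating it with the arbitrarily large negligibility exponent $k$ genuinely produces decay faster than any power of $\epsilon$. A minor technical point is justifying the Grönwall step rigorously when $\|v_\epsilon(t,\cdot)\|_{L^2}$ may vanish; this is handled in the usual way by working with $\bigl(\|v_\epsilon(t,\cdot)\|_{L^2}^2 + \delta\bigr)^{1/2}$ and letting $\delta \to 0$, or by integrating the differential inequality for the squared norm directly. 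Everything else follows the template already used in Proposition \ref{prop1.clas.schr} and Lemma \ref{techn.lem}.
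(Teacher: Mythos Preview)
Your proof is correct and follows essentially the same strategy as the paper: subtract the two regularised equations, obtain the $L^2$ estimate $\|v_\epsilon(t,\cdot)\|_{L^2}\le \|u_{0,\epsilon}-\tilde u_{0,\epsilon}\|_{L^2}+\int_0^T\|f_\epsilon(\tau,\cdot)\|_{L^2}\,d\tau$ for the inhomogeneous problem, then bound the forcing by $\|p_\epsilon-\tilde p_\epsilon\|_{L^\infty}\|\tilde u_\epsilon\|_{L^2}$ and conclude via negligibility against moderateness. The only cosmetic difference is that the paper derives that $L^2$ estimate by Duhamel's principle combined with the energy conservation of Lemma~\ref{techn.lem} and Minkowski's integral inequality, whereas you obtain it directly from the differential inequality $\tfrac12\partial_t\|v_\epsilon\|_{L^2}^2\le\|f_\epsilon\|_{L^2}\|v_\epsilon\|_{L^2}$; both routes yield the identical bound and are equivalent here.
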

\begin{proof}
Let  $(u_\epsilon)_\epsilon$ and $(\tilde{u}_\epsilon)_\epsilon$ be the families of solutions corresponding to the Cauchy problems \eqref{kg.reg.tild} and \eqref{kg.ret.notil}, respectively. If we denote by $U_\epsilon(t,\cdot):=u_\epsilon(t,\cdot)-\tilde{u}_\epsilon(t,\cdot)$, then $U_\epsilon$ satisfies
	\begin{equation}\label{kg.uniq}
		\begin{cases}
			i\partial_{t}U_\epsilon(t,x) +\mathcal{R}^{s}U_\epsilon (t,x)+p_\epsilon(x)U_\epsilon(t,x)=f_\epsilon(t,x)\,,\quad (t,x)\in [0,T]\times \mathbb{G},\\
			U_\epsilon(0,x)=(u_{0,\epsilon}-\tilde{u}_{0,\epsilon})(x)\,, x \in \G\,,	
		\end{cases}       
	\end{equation}
where $f_\epsilon(t,x):=(\tilde{p}_\epsilon(x)-p_\epsilon(x))\tilde{u}_\epsilon(t,x)$. \\

The solution of the Cauchy problem \eqref{kg.uniq} can be expressed in terms of the solution to the corresponding homogeneous Cauchy problem using Duhamel's principle. Indeed, if $W_\epsilon(t,x)$, and $V_\epsilon(t,x;\sigma)$, where $\sigma$ is some fixed parameter in $[0,T]$, are the solutions to the homogeneous Cauchy problems 
\begin{equation*}\label{hom.Ve}
\begin{cases*}
	i\partial_{t}V_\epsilon(t, x;\sigma)+\R^s V_\epsilon(t, x;\sigma)+p_\epsilon V_\epsilon(t, x;\sigma)=0\,,\quad\text{in}\, (\sigma,T] \times \G,\\
	V_\epsilon(t, x;\sigma)=f_\epsilon(\sigma,x)\,\quad \text{on}\,\{t=\sigma\}\times \G\,,
\end{cases*}
\end{equation*}
and 
\begin{equation*}
\begin{cases*}
	i\partial_{t}W_\epsilon(t, x)+\R^s W_\epsilon(t, x)+p_\epsilon W_\epsilon(t, x)=0\,,\quad\text{in}\, [0,T] \times \G,\\
	W_\epsilon(t, x)=(u_{0,\epsilon}-\tilde{u}_{0,\epsilon})(x)\,\quad \text{on}\,\{t=0\}\times \G\,,
\end{cases*}
\end{equation*}
respectively, then $U_\epsilon$ is given by 
\begin{equation}\label{duh.prin}
U_\epsilon(t,x)=W_\epsilon(t,x)+\int_{0}^{t}V_\epsilon(t-\sigma,x;\sigma)\,d\sigma\,.
\end{equation}
Taking the $L^2$-norm in \eqref{duh.prin} and using the energy conservation \eqref{en.cons} to estimate $V_\epsilon$ we get 
\begin{eqnarray}\label{mink}
\|U_\epsilon(t,\cdot)\|_{L^2(\G)} & \leq & \|W_\epsilon(t,\cdot)\|_{L^2(\G)}+\int_{0}^{T}\|V_\epsilon(t-\sigma,\cdot;\sigma)\|_{L^2(\G)}\,d\sigma\\
& \lesssim & \|u_{0,\epsilon}-\tilde{u}_{0,\epsilon}\|_{L^2(\G)}+\int_{0}^{T}\|f_\epsilon(\sigma,\cdot)\|_{L^2(\G)}\,d\sigma\nonumber\\
& \lesssim & \|u_{0,\epsilon}-\tilde{u}_{0,\epsilon}\|_{L^2(\G)}+\|\tilde{p}_\epsilon-p_\epsilon\|_{L^\infty(\G)}\int_{0}^{T}\|\tilde{u}_\epsilon(\sigma,\cdot)\|_{L^2(\G)}\,d\sigma\nonumber\,,
\end{eqnarray}
for all $t \in [0,T]$, where for the first inequality \eqref{mink} we have applied Minkowski's integral inequality, i.e., that 
\[
\|\int_{0}^{t}V_\epsilon(t-\sigma,\cdot;\sigma)\,d\sigma\|_{L^2(\G)}\leq \int_{0}^{t}\|V_\epsilon(t-\sigma,\cdot;\sigma)\|_{L^2(\G)}\,d\sigma\,.
\]

 Now, using the fact that $(u_{0,\epsilon}-\tilde{u}_{0,\epsilon})_\epsilon$ is $H^{\frac{s \nu}{2}(\G)}$-negligible, while also that the net $(\tilde{u}_\epsilon)_\epsilon$, as being a very weak solution to the Cauchy problem \eqref{kg.reg.tild}, is $H^{\frac{s \nu}{2}}(\G)$-moderate and that $(p_\epsilon-\tilde{p}_\epsilon)_\epsilon$ is $L^{\infty}$-negligible, we get that 
\[
\|U_\epsilon(t,\cdot)\|_{L^2(\G)} \lesssim \epsilon^{N}+\epsilon^{\tilde{N}}  \int_{0}^{T}\epsilon^{-N_1}\,d\sigma  \,,
\]
for some $N_1 \in \mathbb{N}$, and for all $N,\tilde{N} \in \mathbb{N}$, $\epsilon \in (0,1]$. That is, we have
\[
\|U_\epsilon(t,\cdot)\|_{L^2(\G)} \lesssim \epsilon^{k}\,,
\]
for all $k \in \mathbb{N}$, and the last shows that the net $(u_\epsilon)_\epsilon$ is the unique very weak solution to the Cauchy problem \eqref{sch.eq}.
\end{proof}
Alternative to Theorem \ref{thm.uniq.kg.1} conditions on the nets $(p_\epsilon)_\epsilon, (\tilde{p}_\epsilon)_\epsilon$ that guarantee the very weakly well-posedness of \eqref{sch.eq} are given in the following theorem.

\begin{theorem}\label{thm.uniq.kg.2}
Let $Q>\nu s$, and suppose that $u_0 \in H^{\frac{s \nu}{2}}(\G)$. Then, the very weak solution to the Cauchy problem \eqref{sch.eq} is $(L^\infty(\G),L^{\frac{2Q}{\nu s}}(\G))$-unique. Moreover, 
the very weak solution to the Cauchy problem \eqref{sch.eq} is also 
$(L^{\frac{2Q}{\nu s}}(\G)\cap L^{\frac{Q}{\nu s}}(\G),L^{\frac{2Q}{\nu s}}(\G))$-unique
and
$(L^{\frac{2Q}{\nu s}}(\G)\cap L^{\frac{Q}{\nu s}}(\G),L^\infty(\G))$-unique.
\end{theorem}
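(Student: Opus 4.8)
The plan is to run the Duhamel/energy–conservation scheme of the proof of Theorem \ref{thm.uniq.kg.1}, replacing the crude $L^\infty$ bound on the potential difference by a Hölder–Sobolev estimate in the first two cases. Fix admissible nets $p_\epsilon,\tilde p_\epsilon\geq 0$ (moderate in the space $X$ specified in each case) and $H^{\frac{s\nu}{2}}(\G)$-moderate regularisations $u_{0,\epsilon},\tilde u_{0,\epsilon}$ with $(u_{0,\epsilon}-\tilde u_{0,\epsilon})_\epsilon$ being $H^{\frac{s\nu}{2}}(\G)$-negligible. By Proposition \ref{prop1.clas.schr} (when $X=L^\infty(\G)$) or Proposition \ref{prop2} (when $X=L^{\frac{2Q}{\nu s}}(\G)\cap L^{\frac{Q}{\nu s}}(\G)$, which uses $Q>\nu s$), the corresponding nets of classical solutions $(u_\epsilon)_\epsilon,(\tilde u_\epsilon)_\epsilon\subset C([0,T];H^{\frac{s\nu}{2}}(\G))$ exist and are $H^{\frac{s\nu}{2}}(\G)$-moderate, so there is $N_1\in\mathbb N$ with $\sup_{t\in[0,T]}\|\tilde u_\epsilon(t,\cdot)\|_{H^{\frac{s\nu}{2}}(\G)}\lesssim\epsilon^{-N_1}$. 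Setting $U_\epsilon:=u_\epsilon-\tilde u_\epsilon$, it solves \eqref{kg.uniq} with source $f_\epsilon(t,x)=(\tilde p_\epsilon(x)-p_\epsilon(x))\tilde u_\epsilon(t,x)$, and Duhamel's principle \eqref{duh.prin} together with the energy conservation of Lemma \ref{techn.lem} for the homogeneous problem defining $V_\epsilon$, and Minkowski's integral inequality exactly as in \eqref{mink}, gives
\[
\|U_\epsilon(t,\cdot)\|_{L^2(\G)}\lesssim \|u_{0,\epsilon}-\tilde u_{0,\epsilon}\|_{L^2(\G)}+\int_0^T\|f_\epsilon(\sigma,\cdot)\|_{L^2(\G)}\,d\sigma\,,
\]
uniformly in $t\in[0,T]$.

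The only genuinely new point is the estimate of $\|f_\epsilon(\sigma,\cdot)\|_{L^2(\G)}$ when the negligibility of $(p_\epsilon-\tilde p_\epsilon)_\epsilon$ is measured in $Y=L^{\frac{2Q}{\nu s}}(\G)$. Here I would apply Hölder's inequality in the form $\|f_\epsilon(\sigma,\cdot)\|_{L^2(\G)}\leq\|\tilde p_\epsilon-p_\epsilon\|_{L^{\frac{2Q}{\nu s}}(\G)}\,\|\tilde u_\epsilon(\sigma,\cdot)\|_{L^{\frac{2Q}{Q-\nu s}}(\G)}$ (the reciprocals $\tfrac{\nu s}{2Q}$ and $\tfrac{Q-\nu s}{2Q}$ adding to $\tfrac12$), and then invoke the Sobolev embedding \eqref{inclusions} with $\tilde q_0=2$, $q_0=\frac{2Q}{Q-\nu s}$, $b=\frac{s\nu}{2}$, $a=0$ — admissible precisely because $Q>\nu s$ — to bound $\|\tilde u_\epsilon(\sigma,\cdot)\|_{L^{\frac{2Q}{Q-\nu s}}(\G)}\lesssim\|\R^{\frac s2}\tilde u_\epsilon(\sigma,\cdot)\|_{L^2(\G)}\leq\|\tilde u_\epsilon(\sigma,\cdot)\|_{H^{\frac{s\nu}{2}}(\G)}\lesssim\epsilon^{-N_1}$. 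Since $(p_\epsilon-\tilde p_\epsilon)_\epsilon$ is $L^{\frac{2Q}{\nu s}}(\G)$-negligible, $\|\tilde p_\epsilon-p_\epsilon\|_{L^{\frac{2Q}{\nu s}}(\G)}\lesssim\epsilon^{k}$ for every $k$, hence $\int_0^T\|f_\epsilon(\sigma,\cdot)\|_{L^2(\G)}\,d\sigma\lesssim T\,\epsilon^{k-N_1}$; taking $k=N+N_1$ makes this $\lesssim\epsilon^{N}$. Combining with $\|u_{0,\epsilon}-\tilde u_{0,\epsilon}\|_{L^2(\G)}\lesssim\|u_{0,\epsilon}-\tilde u_{0,\epsilon}\|_{H^{\frac{s\nu}{2}}(\G)}\lesssim\epsilon^{N}$ yields $\|U_\epsilon(t,\cdot)\|_{L^2(\G)}\lesssim\epsilon^{N}$ for all $N$, uniformly in $t\in[0,T]$, which gives the $(L^\infty(\G),L^{\frac{2Q}{\nu s}}(\G))$- and $(L^{\frac{2Q}{\nu s}}(\G)\cap L^{\frac{Q}{\nu s}}(\G),L^{\frac{2Q}{\nu s}}(\G))$-uniqueness.

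For the $(L^{\frac{2Q}{\nu s}}(\G)\cap L^{\frac{Q}{\nu s}}(\G),L^\infty(\G))$-uniqueness the argument is shorter: $(\tilde u_\epsilon)_\epsilon$ is again $H^{\frac{s\nu}{2}}(\G)$-moderate by Proposition \ref{prop2}, hence $L^2(\G)$-moderate, and since now $\|\tilde p_\epsilon-p_\epsilon\|_{L^\infty(\G)}\lesssim\epsilon^{k}$ for all $k$, the bound $\|f_\epsilon(\sigma,\cdot)\|_{L^2(\G)}\leq\|\tilde p_\epsilon-p_\epsilon\|_{L^\infty(\G)}\|\tilde u_\epsilon(\sigma,\cdot)\|_{L^2(\G)}$ used in Theorem \ref{thm.uniq.kg.1} applies verbatim. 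I expect the main obstacle to be purely bookkeeping: checking that the Hölder exponent $\frac{2Q}{\nu s}$ is exactly the one complementary to the Sobolev-critical exponent $\frac{2Q}{Q-\nu s}$ of $H^{\frac{s\nu}{2}}(\G)$, so that \eqref{inclusions} is applicable with the stated parameters, and making sure the hypothesis $Q>\nu s$ is invoked precisely at that step (and to guarantee the existence of the regularised classical solutions through Proposition \ref{prop2}); the rest is the Duhamel/energy-conservation mechanism already established in Theorem \ref{thm.uniq.kg.1}.
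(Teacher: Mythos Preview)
Your proposal is correct and follows essentially the same approach as the paper's proof: both reduce to the Duhamel/energy-conservation estimate from Theorem~\ref{thm.uniq.kg.1} and then handle the source term via H\"older's inequality with exponents $\frac{2Q}{\nu s}$ and $\frac{2Q}{Q-\nu s}$ combined with the Sobolev embedding \eqref{inclusions} to bound $\|\tilde u_\epsilon(\sigma,\cdot)\|_{L^{\frac{2Q}{Q-\nu s}}(\G)}\lesssim\|\R^{s/2}\tilde u_\epsilon(\sigma,\cdot)\|_{L^2(\G)}$. If anything, your write-up is more explicit than the paper's, which treats only the first case in detail and declares the other two similar.
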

\begin{proof}
We will only prove the $(L^\infty(\G),L^{\frac{2Q}{\nu s}}(\G))$-uniqueness as the other two uniqueness statements are similar. Using arguments similar to those developed in Theorem \ref{thm.uniq.kg.1}, we arrive at \begin{multline*}
    \|U_\epsilon(t,\cdot)\|_{L^2(\G)} \lesssim \|u_{0,\epsilon}-\tilde{u}_{0,\epsilon}\|_{L^2(\G)}+\int_{0}^{T}\|f_\epsilon(\sigma,\cdot)\|_{L^2(\G)}\,d\sigma
    \\ =\|u_{0,\epsilon}-\tilde{u}_{0,\epsilon}\|_{L^2(\G)}+ \int_{0}^{T}\|(\tilde{p}_\epsilon-p_\epsilon)(\cdot)\tilde{u}_\epsilon(\sigma,\cdot)\|_{L^2(\G)}\,d \sigma\,.
 \end{multline*}
for all $t\in[0, T]$.
Additionally, by applying H\"older's inequality, together with the Sobolev embeddings \eqref{inclusions}, we have 
    \[
    \|(\tilde{p}_\epsilon-p_\epsilon)(\cdot)\tilde{u}_\epsilon(t,\cdot)\|_{L^2(\G)} \leq \|\tilde{p}_\epsilon-p_\epsilon\|_{L^{\frac{2Q}{\nu s}}(\G)}\|\R^{\frac{s}{2}}\tilde{u}_\epsilon(t,\cdot)\|_{L^2(\G)}\,,
    \]
    where since $(\tilde{u}_\epsilon)$, as being the very weak solution corresponding to the $L^{\infty}(\G)$-moderate net $(\tilde{p}_\epsilon)_\epsilon$, is $H^{\frac{s \nu}{2}}(\G)$-moderate, we have 
    \[
    \|\R^{\frac{s}{2}}\tilde{u}_\epsilon(t,\cdot)\|_{L^2(\G)} \lesssim \epsilon^{-N_1}\,,\quad \text{for some}\,N_1 \in \mathbb{N}\,.
    \]
    Summarising the above, and since
    \[
   \|u_{0,\epsilon}-\tilde{u}_{0,\epsilon}\|_{L^2(\G)}, \|\tilde{p}_\epsilon-p_\epsilon\|_{L^{\frac{2Q}{\nu s}}(\G)}\lesssim \epsilon^{N}\,,\quad \forall N \in \mathbb{N}\,,
    \]we obtain 
    \[
    \|U_\epsilon(t,\cdot)\|_{L^2(\G)}\lesssim \epsilon^k\,,\quad \forall k \in \mathbb{N}\,,
    \]
uniformly in $t$, and this finishes the proof of Theorem \ref{thm.uniq.kg.2}.
\end{proof}

\section{Consistency of the very weak solution with the classical one}

The next theorems stress the conditions, on the potential $p$ and on the initial data $u_0$, under which, the classical solution to the Cauchy problem \eqref{sch.eq}, as given in Proposition \ref{prop1.clas.schr} or Proposition \ref{prop2}, can be recaptured by its very weak solution. To avoid any possible misunderstanding, let us clarify by a `regularisation' of $p$ we mean the net arising via the convolution of $p$ with non-negative Friedrichs mollifiers as in \eqref{mol}.

\begin{theorem}\label{thm.consis.1}
 Let $Q > \nu s$. Consider the Cauchy problem \eqref{sch.eq}, and let $u_0 \in H^{\frac{s \nu}{2}}(\G)$. Assume also that $p \in L^{\frac{2Q}{\nu s}}(\G)\cap L^{\frac{Q}{\nu s}}(\G) $, $p\geq 0$, and that $(p_\epsilon)_\epsilon$, is a regularisation of the potential $p$. Then the regularised net $(u_\epsilon)_\epsilon$ converges, as $\epsilon \rightarrow 0$, in  $L^2(\G)$ to the classical solution $u$ given by Proposition \ref{prop2}.
\end{theorem}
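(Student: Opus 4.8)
The plan is to compare $u_\epsilon$, the solution of the regularised problem \eqref{kg.reg} with $p_\epsilon=p*\psi_\epsilon$ and $u_{0,\epsilon}=u_0$, against the classical solution $u$ of \eqref{sch.eq} provided by Proposition \ref{prop2}, and to show that $V_\epsilon:=u_\epsilon-u\to 0$ in $C([0,T];L^2(\G))$. First I would record that $u_\epsilon$ is well defined in $C([0,T];H^{\frac{s\nu}{2}}(\G))$: since $p\in L^{\frac{2Q}{\nu s}}(\G)$ and $\psi_\epsilon\in\mathcal{D}(\G)$, Young's inequality gives $p_\epsilon\in C^\infty(\G)\cap L^\infty(\G)$, so Proposition \ref{prop1.clas.schr} applies; and $u$ exists by Proposition \ref{prop2}. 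Subtracting the two equations and writing $p_\epsilon u_\epsilon-pu=p_\epsilon V_\epsilon+(p_\epsilon-p)u$, one finds that $V_\epsilon$ solves
\[
i\partial_t V_\epsilon+\R^s V_\epsilon+p_\epsilon V_\epsilon=f_\epsilon\,,\qquad V_\epsilon(0,\cdot)=0\,,
\]
with source term $f_\epsilon(t,x):=-(p_\epsilon(x)-p(x))\,u(t,x)$. (If one prefers to work with a genuine $H^{\frac{s\nu}{2}}(\G)$-moderate regularising net $(u_{0,\epsilon})_\epsilon\to u_0$ in $L^2(\G)$, the only change is an extra homogeneous term whose $L^2$-norm equals $\|u_{0,\epsilon}-u_0\|_{L^2(\G)}$ and hence also vanishes.)

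Next I would reproduce the Duhamel argument from the proof of Theorem \ref{thm.uniq.kg.1}: because $p_\epsilon\geq 0$, the energy conservation of Lemma \ref{techn.lem} applies to the homogeneous Cauchy problems for $\R^s+p_\epsilon$ that enter the Duhamel representation of $V_\epsilon$, and together with Minkowski's integral inequality this yields
\[
\|V_\epsilon(t,\cdot)\|_{L^2(\G)}\leq\int_0^T\|f_\epsilon(\sigma,\cdot)\|_{L^2(\G)}\,d\sigma\,,\qquad t\in[0,T]\,.
\]
(The same bound also follows directly from an energy identity: multiplying the $V_\epsilon$-equation by $-i\overline{V_\epsilon}$, integrating over $\G$ and taking real parts kills the real contributions $\langle\R^s V_\epsilon,V_\epsilon\rangle$ and $\langle p_\epsilon V_\epsilon,V_\epsilon\rangle$ and leaves $\partial_t\|V_\epsilon(t,\cdot)\|_{L^2(\G)}\leq\|f_\epsilon(t,\cdot)\|_{L^2(\G)}$, which integrates to the displayed inequality since $V_\epsilon(0,\cdot)=0$.)

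It then remains to bound $\|f_\epsilon(\sigma,\cdot)\|_{L^2(\G)}$ uniformly in $\sigma\in[0,T]$ and let $\epsilon\to 0$. I would apply H\"older's inequality with exponents $\frac{2Q}{\nu s}$ and $q_0:=\frac{2Q}{Q-\nu s}$ — both finite precisely because $Q>\nu s$ — and then the Sobolev embedding \eqref{inclusions} with $\tilde q_0=2$, $q_0=\frac{2Q}{Q-\nu s}$, $b=\frac{s\nu}{2}$, $a=0$ (so that $b-a=Q(\frac{1}{2}-\frac{1}{q_0})$), obtaining
\[
\|f_\epsilon(\sigma,\cdot)\|_{L^2(\G)}\leq\|p_\epsilon-p\|_{L^{\frac{2Q}{\nu s}}(\G)}\|u(\sigma,\cdot)\|_{L^{q_0}(\G)}\lesssim\|p_\epsilon-p\|_{L^{\frac{2Q}{\nu s}}(\G)}\|\R^{\frac{s}{2}}u(\sigma,\cdot)\|_{L^2(\G)}\,.
\]
Since $\|\R^{\frac{s}{2}}u(\sigma,\cdot)\|_{L^2(\G)}\leq\|u(\sigma,\cdot)\|_{H^{\frac{s\nu}{2}}(\G)}$, the a priori estimate \eqref{prop2.claim} of Proposition \ref{prop2} bounds this by a constant $C(u_0,p)$ independent of $\sigma$ and $\epsilon$. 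Combining the last three displays gives
\[
\sup_{t\in[0,T]}\|u_\epsilon(t,\cdot)-u(t,\cdot)\|_{L^2(\G)}\lesssim T\,C(u_0,p)\,\|p_\epsilon-p\|_{L^{\frac{2Q}{\nu s}}(\G)}\,,
\]
and since $\psi_\epsilon$ in \eqref{mol} forms an approximate identity on the homogeneous group $\G$ and $p\in L^{\frac{2Q}{\nu s}}(\G)$ with $1<\frac{2Q}{\nu s}<\infty$, the right-hand side tends to $0$ as $\epsilon\to0$. Hence $u_\epsilon\to u$ in $C([0,T];L^2(\G))$, which is the assertion.

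The main obstacle is not any single estimate — these are all routine H\"older/Sobolev manipulations resting on the already-established propositions — but rather ensuring the two mechanisms mesh: one needs the \emph{uniform-in-time} control of $u$ coming from Proposition \ref{prop2} (which is exactly why $Q>\nu s$ and $p\in L^{\frac{2Q}{\nu s}}(\G)\cap L^{\frac{Q}{\nu s}}(\G)$ are imposed) together with the $L^{\frac{2Q}{\nu s}}(\G)$-convergence of the mollification $p*\psi_\epsilon\to p$; the latter is the standard convergence of convolution with an approximate identity on a homogeneous Lie group, and the finiteness of the exponent $\frac{2Q}{\nu s}$ — as opposed to an $L^\infty$ framework — is precisely what makes it available.
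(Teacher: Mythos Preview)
Your proof is correct and follows essentially the same route as the paper: form the difference $u_\epsilon-u$, observe it solves the inhomogeneous problem with source $(p_\epsilon-p)u$, apply the Duhamel/energy-conservation estimate from Theorem~\ref{thm.uniq.kg.1} and Lemma~\ref{techn.lem}, and then control the source via H\"older plus the Sobolev embedding \eqref{inclusions} together with $p_\epsilon\to p$ in $L^{\frac{2Q}{\nu s}}(\G)$. The only cosmetic differences are that the paper keeps the general regularisation $u_{0,\epsilon}$ of $u_0$ (yielding the extra term $\|u_0-u_{0,\epsilon}\|_{L^2(\G)}$ you mention parenthetically) and invokes dominated convergence for the time integral rather than the uniform-in-$\sigma$ bound you extract from \eqref{prop2.claim}.
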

\begin{proof}
	Let $u$ be the classical solution of \eqref{sch.eq} given by Proposition \ref{prop2}, and let $(u_\epsilon)$ be the very weak solution of the regularised analogue of it as in \eqref{kg.reg.tild}. If we denote by $W_\epsilon(t,x):=u(t,x)-u_\epsilon(t,x)$, then $W_\epsilon$ solves the auxiliary Cauchy problem
	\begin{equation}\label{eq.m=0}
	\begin{cases*}
		i\partial_{t}W_\epsilon(t,x)+\R^s W_\epsilon(t,x)+p_\epsilon(x)W_\epsilon(t,x)=\eta_\epsilon(t,x), \\
		W_\epsilon(0,x)=(u_0-u_{0,\epsilon})(x)\,,
	\end{cases*}
	\end{equation}
	where $\eta_\epsilon(t,x):=(p_\epsilon(x)-p(x))u(t,x)$.
Using Duhamel's principle and arguments similar to Theorem \ref{thm.uniq.kg.1} we get the estimates
\begin{eqnarray}
\label{cons.thm1.sch}
 \|W_\epsilon(t,\cdot)\|_{L^2(\G)} & \lesssim & \|u_{0}-u_{0,\epsilon}\|_{L^2(\G)}+\int_{0}^{T}\|\eta_\epsilon(\sigma,\cdot)\|_{L^2(\G)}\,d\sigma\nonumber\\
 & = & \|u_{0}-u_{0,\epsilon}\|_{L^2(\G)}+ \int_{0}^{T}\|(p_\epsilon-p)(\cdot)u(\sigma,\cdot)\|_{L^2(\G)}d \sigma\nonumber\\
 & \lesssim & \|u_{0}-u_{0,\epsilon}\|_{L^2(\G)}+\int_{0}^{T}\|p_\epsilon-p\|_{L^{\frac{2Q}{\nu s}}(\G)}\|\R^{\frac{s}{2}}u(\sigma,\cdot)\|_{L^2(\G)}d\sigma,
\end{eqnarray}
 where to get the last inequality we apply H\"older's inequality and the Sobolev embeddings \eqref{inclusions}. Now, since by Proposition \ref{prop2} we have $u\in H^{\frac{s \nu}{2}}(\G)$, while also $p \in L^{\frac{2Q}{\nu s}}(\G)$, $u_0 \in H^{\frac{s \nu }{2}}(\G)$, we get that 
\[
\|u_{0}-u_{0,\epsilon}\|_{L^2(\G)}\,,\|p_\epsilon-p\|_{L^{\frac{2Q}{\nu s}}(\G)}\|\R^{\frac{s}{2}}u(\sigma,\cdot)\|_{L^2(\G)} \rightarrow 0\,,
\]
as $\epsilon \rightarrow 0$, so that by \eqref{cons.thm1.sch} and Lebesgue's dominated convergence theorem we get

 \begin{equation}
     \label{duh.e.0}
      \|W_\epsilon(t,\cdot)\|_{L^2(\G)} \rightarrow 0\,,
 \end{equation}
 uniformly in $t \in [\sigma,T]$, where $\sigma \in [0,T]$, i.e., the very weak solution converges to the classical one in $L^2$, and this finishes the proof of Theorem \ref{thm.consis.1}.
\end{proof}

In the following theorem we denote by $C_{0}(\G)$ the space of continuous functions on $\G$ vanishing at infinity, that is, such that for every $\epsilon>0$ there exists a compact set $K$ outside of which we have $|f|<\epsilon$. Note that $C_0(\G)$ is a Banach space if endowed with the norm $\|\cdot\|_{L^{\infty}(\G)}$.

\begin{theorem}\label{thm.consis.2}
Consider the Cauchy problem \eqref{sch.eq}, and let $u_0 \in H^{\frac{s \nu}{2}}(\G)$. Assume also that $p \in C_0(\G)$, $p\geq 0$, and that $(p_\epsilon)_\epsilon$, $p_\epsilon\geq 0$, is a regularisation of the coefficient $p$. Then the regularised net $(u_\epsilon)_\epsilon$ converges, as $\epsilon \rightarrow 0$, in $L^2(\G)$ to the classical solution $u$ given by Proposition \ref{prop1.clas.schr}. 
\end{theorem}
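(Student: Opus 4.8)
The plan is to mirror the proof of Theorem \ref{thm.consis.1}, replacing the Sobolev-embedding/H\"older estimate on $\|\eta_\epsilon(\sigma,\cdot)\|_{L^2(\G)}$ by one that exploits the boundedness hypothesis $p\in C_0(\G)$. First I would set $W_\epsilon(t,x):=u(t,x)-u_\epsilon(t,x)$, where $u$ is the classical solution furnished by Proposition \ref{prop1.clas.schr} and $(u_\epsilon)_\epsilon$ is the regularised net solving \eqref{kg.reg.tild} with $(p_\epsilon)_\epsilon$ the mollification of $p$. Then $W_\epsilon$ solves
\begin{equation*}
\begin{cases*}
i\partial_{t}W_\epsilon(t,x)+\R^s W_\epsilon(t,x)+p_\epsilon(x)W_\epsilon(t,x)=\eta_\epsilon(t,x),\\
W_\epsilon(0,x)=(u_0-u_{0,\epsilon})(x),
\end{cases*}
\end{equation*}
with $\eta_\epsilon(t,x):=(p_\epsilon(x)-p(x))u(t,x)$. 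Since $p\in C_0(\G)\subset L^\infty(\G)$ is non-negative and $(p_\epsilon)_\epsilon$ is a Friedrichs regularisation of it, $(p_\epsilon)_\epsilon$ is $L^\infty(\G)$-moderate (indeed uniformly bounded, $\|p_\epsilon\|_{L^\infty(\G)}\le\|p\|_{L^\infty(\G)}$), so Proposition \ref{prop1.clas.schr} applies to each $\epsilon$-problem and $(u_\epsilon)_\epsilon$ exists in $C([0,T];H^{\frac{s\nu}{2}}(\G))$.

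Next I would run Duhamel's principle exactly as in Theorem \ref{thm.uniq.kg.1}: writing $W_\epsilon$ as the homogeneous solution with data $u_0-u_{0,\epsilon}$ plus $\int_0^t V_\epsilon(t-\sigma,\cdot;\sigma)\,d\sigma$, taking $L^2$-norms, applying Minkowski's integral inequality, and invoking the energy conservation \eqref{en.cons} (Lemma \ref{techn.lem}) to bound each $\|V_\epsilon(t-\sigma,\cdot;\sigma)\|_{L^2(\G)}$ by $\|\eta_\epsilon(\sigma,\cdot)\|_{L^2(\G)}$. This yields
\begin{equation*}
\|W_\epsilon(t,\cdot)\|_{L^2(\G)}\lesssim \|u_0-u_{0,\epsilon}\|_{L^2(\G)}+\int_0^T\|(p_\epsilon-p)(\cdot)u(\sigma,\cdot)\|_{L^2(\G)}\,d\sigma,
\end{equation*}
uniformly in $t\in[0,T]$. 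Now for the new estimate: I would bound $\|(p_\epsilon-p)u(\sigma,\cdot)\|_{L^2(\G)}\le\|p_\epsilon-p\|_{L^\infty(\G)}\,\|u(\sigma,\cdot)\|_{L^2(\G)}$, and use that by Proposition \ref{prop1.clas.schr}, $\sup_{\sigma\in[0,T]}\|u(\sigma,\cdot)\|_{L^2(\G)}\le\sup_{\sigma}\|u(\sigma,\cdot)\|_{H^{\frac{s\nu}{2}}(\G)}<\infty$. Thus $\|W_\epsilon(t,\cdot)\|_{L^2(\G)}\lesssim \|u_0-u_{0,\epsilon}\|_{L^2(\G)}+T\|p_\epsilon-p\|_{L^\infty(\G)}$.

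It remains to send $\epsilon\to0$. The term $\|u_0-u_{0,\epsilon}\|_{L^2(\G)}\to0$ is the standard property of Friedrichs mollifiers on $L^2(\G)$. The crux — and the only genuinely new point compared to Theorem \ref{thm.consis.1} — is that $\|p_\epsilon-p\|_{L^\infty(\G)}\to0$; this is precisely where the hypothesis $p\in C_0(\G)$ rather than merely $p\in L^\infty(\G)$ is used, since convolution with $\psi_\epsilon$ converges uniformly to $p$ exactly for uniformly continuous bounded functions, and $C_0(\G)\subset C_u(\G)$ (a function vanishing at infinity is uniformly continuous, by the usual compact-exhaustion argument, where the dilation structure \eqref{mol} of the mollifier is what makes $p*\psi_\epsilon$ meaningful on $\G$). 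I expect this uniform-convergence step to be the main obstacle in the sense that it is where one must be careful about the group/dilation setting; once it is in hand, the dominated-convergence/uniform-in-$t$ conclusion $\|W_\epsilon(t,\cdot)\|_{L^2(\G)}\to0$ is immediate, which is exactly the asserted $L^2(\G)$-convergence of $(u_\epsilon)_\epsilon$ to the classical solution $u$ of Proposition \ref{prop1.clas.schr}, completing the proof.
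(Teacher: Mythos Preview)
Your proof is correct and follows essentially the same route as the paper: set up the auxiliary problem for $W_\epsilon$, apply Duhamel and the energy conservation of Lemma \ref{techn.lem} to reach the estimate $\|W_\epsilon(t,\cdot)\|_{L^2(\G)}\lesssim \|u_0-u_{0,\epsilon}\|_{L^2(\G)}+\int_0^T\|p_\epsilon-p\|_{L^\infty(\G)}\|u(\sigma,\cdot)\|_{L^2(\G)}\,d\sigma$, and conclude using $\|p_\epsilon-p\|_{L^\infty(\G)}\to 0$. The only cosmetic difference is that the paper invokes Lemmas 3.1.58 and 3.1.59 of \cite{FR16} for this last uniform convergence, whereas you justify it directly via the uniform continuity of $p\in C_0(\G)$.
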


Before giving the proof of Theorem \ref{thm.consis.2}, let us make the following observation: If $p \in C_0(\G)$, then $\|p_\epsilon\|_{L^\infty(\G)}\leq C<\infty$, uniformly in $\epsilon \in (0,1]$. 

\begin{proof}[Proof of Theorem \ref{thm.consis.2}]

First observe that for $p$, $(p_\epsilon)_\epsilon$ as in the hypothesis, we have $p_\epsilon \in L^{\infty}(\G)$ for each $\epsilon \in (0,1]$. Hence, if we denote by $W_\epsilon$ the solution to the problem \eqref{eq.m=0}, then, reasoning as we did in Theorem \ref{cons.thm1.sch}, we obtain 
\[
\|W_\epsilon(t,\cdot)\|_{L^2(\G)}\lesssim \|u_0-u_{0,\epsilon}\|_{L^2(\G)}+\int_{0}^{T}\|(p_\epsilon-p)(\cdot)u(\sigma,\cdot)\|_{L^2(\G)}\,d\sigma\,,
\]
uniformly in $t \in [0,T]$. Now, since
\[
\|(p_\epsilon-p)(\cdot)u(\sigma,\cdot)\|_{L^2(\G)} \leq \|p_\epsilon-p\|_{L^{\infty}(\G)}\|u(\sigma,\cdot)\|_{L^2(\G)}\,,
\]
while by Lemmas 3.1.58 and 3.1.59 in \cite{FR16} we have 
    \[
    \|p_\epsilon-p\|_{L^{\infty}(\G)} \rightarrow 0\,,\quad \text{as}\quad \epsilon \rightarrow 0\,,
    \]
    summarising the above we get 
    \begin{equation}
        \label{thm.cons2.infty}
         \|W_\epsilon(t,\cdot)\|_{L^2(\G)}\rightarrow 0\,,\quad \text{as}\quad \epsilon \rightarrow 0\,,
\end{equation}
    and this completes the proof  of Theorem \ref{thm.consis.2}.
\end{proof}

\begin{remark}\label{finrem}
In the consistency result \cite[Theorem 2.3]{ARST21b} the assumption on the potential $p$ is regarded as $p \in L^{\infty}(\mathbb R^d)$. However, this assumption is not a sufficient one; we should instead ask for $p$ to be in the subspace $C_0(\mathbb R^d)$ of $L^{\infty}(\mathbb R^d)$, as follows by Theorem \ref{thm.consis.2} in the particular case where $\G=\mathbb R^d$ and $\mathcal{R}=-\Delta$. 
\end{remark}

\end{document}